\author{\sc Boris Bukh\thanks{\texttt{bbukh@math.princeton.edu}.
Part of this
work was carried out at Tel Aviv University.}\\
{\footnotesize Department of Mathematics}\\[-1.5mm]
{\footnotesize Fine Hall, Washington Rd.}\\[-1.5mm]
{\footnotesize Princeton, NJ 08544}\\[-1.5mm]
{\footnotesize United States}
 \and
{\sc Ji\v{r}\'{\i} Matou\v{s}ek}\thanks{\texttt{matousek@kam.mff.cuni.cz}.}\\
{\footnotesize Department of Applied Mathematics and}\\[-1.5mm]
   {\footnotesize Institute of Theoretical Computer Science (ITI)}\\[-1.5mm]
   {\footnotesize Charles University, Malostransk\'{e} n\'{a}m. 25}\\[-1.5mm]
{\footnotesize  118~00~~Praha~1, Czech Republic}
 \and
 \sc Gabriel Nivasch\thanks{\texttt{gabriel.nivasch@cs.tau.ac.il}. Work was supported
 by ISF Grant 155/05 and by the Hermann Minkowski--MINERVA Center for Geometry
 at Tel Aviv University.}\\
{\footnotesize Blavatnik School of Computer Science}\\[-1.5mm]
{\footnotesize Tel Aviv University}\\[-1.5mm]
{\footnotesize Tel Aviv 69978, Israel}
}
\date{}
\title{Stabbing simplices by points and flats}
\newcommand*{\R}{\mathbb{R}}
 \DeclareMathOperator{\conv}{conv}
 \DeclareMathOperator{\sign}{sign}
 \DeclareMathOperator{\rank}{rank}
\def\:{\colon}
\newcommand\Z{{\mathbb Z}}
\newcommand\Prj{\mathbb{R}\mathbb{P}}
\newcommand\Stief{{\mathbb V}}
\renewcommand\S{{\mathbb S}}
\newtheorem{theorem}{Theorem}[section]
\newtheorem{lemma}[theorem]{Lemma}
\newtheorem{corollary}[theorem]{Corollary}
\begin{document}

\maketitle

\begin{abstract}
The following result was proved by B\'ar\'any in 1982: For every
$d\ge1$ there exists $c_d>0$ such that for every $n$-point set $S$
in $\R^d$ there is a point $p\in\R^d$ contained in at least
$c_dn^{d+1}-O(n^d)$ of the $d$-dimensional simplices spanned by~$S$.

We investigate the largest possible value of $c_d$. It was known
that $c_d\le 1/(2^d({d+1})!)$ (this estimate actually holds for
\emph{every} point set $S$). We construct sets showing that $c_d\le
(d+1)^{-(d+1)}$, and we conjecture this estimate to be tight. The
best known lower bound, due to Wagner, is $c_d\ge
\gamma_d:=(d^2+1)/((d+1)!(d+1)^{d+1})$; in his method, $p$ can be
chosen as any centerpoint of $S$. We construct $n$-point sets with a
centerpoint that is contained in no more than $\gamma_d
n^{d+1}+O(n^d)$ simplices spanned by $S$, thus showing that the
approach using an arbitrary centerpoint cannot be further improved.

We also prove  that for every $n$-point set $S \subset \R^d$ there
exists a $(d-2)$-flat that stabs at least $c_{d,d-2} n^3-O(n^2)$ of
the triangles spanned by $S$, with $c_{d,d-2}\ge \frac1{24}(1-
1/(2d-1)^2)$. To this end, we establish an equipartition result of
independent interest (generalizing planar results of Buck and Buck
and of Ceder): Every mass distribution in $\R^d$ can be divided into
$4d-2$ equal parts by $2d-1$ hyperplanes intersecting in a common
$(d-2)$-flat.
\end{abstract}

\section{Introduction}
Let $S$ be an $n$-point set in $\R^d$ in general position (no $d+1$
points lying on a common hyperplane). The points of $S$ span
$n\choose d+1$ distinct $d$-dimensional simplices. The following
interesting and useful result in discrete geometry (called the
\emph{First Selection Lemma} in \cite{matou_book}), shows that at
least a fixed fraction of these simplices have a point in common:

\begin{theorem}[B\'ar\'any \cite{barany}]\label{t:1sl}
For every $n$-point set $S$ in $\R^d$ in general position there exists a
point $p\in \R^d$ that is contained in at least $c_d n^{d+1} -
O(n^d)$ simplices spanned by $S$, where $c_d$ is a positive constant
depending only on~$d$ (and the implicit constant in the $O(\,)$
notation may also depend on $d$).
\end{theorem}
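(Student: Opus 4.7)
\emph{Proof plan.} My plan is to combine Tverberg's theorem, which produces a ``super-central'' point $p$ together with a partition of $S$ into many parts each containing $p$ in its convex hull, with the Colorful Carath\'eodory theorem, which extracts a distinct simplex through $p$ from every choice of $d+1$ such parts. The constant delivered this way is $c_d = 1/\bigl((d+1)!\,(d+1)^{d+1}\bigr)$, which is essentially the baseline that the abstract mentions as the starting point for Wagner's refinement.

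In detail, I would first apply Tverberg's theorem with $r = \lfloor(n-1)/(d+1)\rfloor + 1 = n/(d+1) + O(1)$ parts; since $n \geq (d+1)(r-1)+1$, this yields a partition $S = T_1 \sqcup \cdots \sqcup T_r$ and a point $p \in \R^d$ with $p \in \conv(T_j)$ for every $j = 1,\ldots,r$. Next, for every $(d+1)$-subset $J \subseteq \{1,\ldots,r\}$, the families $\{T_j\}_{j \in J}$ satisfy the hypothesis of the Colorful Carath\'eodory theorem, so there exist vertices $x_j \in T_j$, $j \in J$, with $p \in \conv\{x_j : j \in J\}$; by the general position assumption these $d+1$ points span a nondegenerate simplex $\sigma_J$ containing $p$. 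Finally, I would observe that distinct $J$'s produce distinct $\sigma_J$'s: since the $T_j$'s are pairwise disjoint and $\sigma_J$ has exactly one vertex in each $T_j$ with $j \in J$, the index set $J$ is recoverable from the vertex set of $\sigma_J$. Hence at least $\binom{r}{d+1} = n^{d+1}/\bigl((d+1)!\,(d+1)^{d+1}\bigr) - O(n^d)$ simplices spanned by $S$ contain $p$, as claimed.

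The main obstacle is really the external input: Tverberg's theorem is the deep ingredient (proved either by Tverberg's original combinatorial argument or by the topological configuration-space/test-map scheme), and without it the argument collapses. Given it, the remaining work -- invoking Colorful Carath\'eodory (itself due to B\'ar\'any, and comparatively elementary) and bookkeeping the simplices -- is routine. A secondary point, directly relevant to what follows in this paper, is that the scheme is wasteful: a simplex may stab $p$ for many reasons besides a single Tverberg witness, so pushing $c_d$ past the above baseline (as Wagner does, reaching $\gamma_d = (d^2+1)/((d+1)!\,(d+1)^{d+1})$) requires a more careful accounting of the overcounting hidden in Steps~2--3.
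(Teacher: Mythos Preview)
Your argument is correct and is precisely B\'ar\'any's original proof (Tverberg partition followed by Colorful Carath\'eodory); the paper does not reprove Theorem~\ref{t:1sl} but merely cites it as background, so there is no in-paper proof to compare against. One small discrepancy worth flagging: the paper records B\'ar\'any's constant as $1/\bigl(d!\,(d+1)^{d+1}\bigr)$, a factor $d+1$ larger than the $1/\bigl((d+1)!\,(d+1)^{d+1}\bigr)$ your counting yields---but this is immaterial for the theorem as stated, which only asserts the existence of some positive $c_d$.
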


In this paper we investigate the value of $c_d$. More precisely,
from now on, let $c_d$ denote the supremum of the numbers such that
the statement of Theorem~\ref{t:1sl} holds for all finite sets $S$
in $\R^d$.

\paragraph{Lower bounds.}
B\'ar\'any's proof yields
\begin{equation*}
c_d \ge {1\over d! (d+1)^{d+1}}.
\end{equation*}
Wagner \cite{wagner_thesis} improved this bound by roughly a factor
of $d$, to
\begin{equation}\label{eq_wagner_bd}
c_d \ge {d^2+1 \over (d+1)! (d+1)^{d+1}}.
\end{equation}
For the special case $d=2$, Boros and F\"uredi
\cite{boros_furedi_ptintriag} achieved the better lower bound of $c_2 =
1/27$ (also see Bukh \cite{bukh} for a simpler proof of this planar
bound).

\paragraph{Upper bounds.}
The following result was proved by K\'arteszi \cite{karteszi} for
$d=2$ (also see Moon \cite[p.~7]{moon} and Boros and F\"uredi
\cite{bf_italian, boros_furedi_ptintriag}) and by B\'ar\'any
\cite{barany} for general $d$:

\begin{theorem}\label{thm_any_S}
If $S$ is any $n$-point set in general position in $\R^d$, then no
point $p\in \R^d$ is contained in more than
\begin{equation*}
\frac {1}{2^d (d+1)!}\, n^{d+1} + O(n^d)
\end{equation*}
$d$-simplices spanned by $S$.
\end{theorem}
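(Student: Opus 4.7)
My plan is to translate so that $p=0$, pass to the sphere $\S^{d-1}$ in order to convert the problem into a combinatorial one about subsets whose convex hull contains the origin, and then exploit an exact count (via sign flips) combined with a charging argument. By translation, assume $p=0$; by general position (perturb if needed), assume $p \notin S$. Set $y_i := x_i/\abs{x_i}$. Since positively rescaling the vertices preserves the property ``$0$ is in the convex hull,'' we have $p \in \conv\{x_{i_0},\dots,x_{i_d}\}$ iff $0 \in \conv\{y_{i_0},\dots,y_{i_d}\}$, and it suffices to show that the number $N$ of $(d+1)$-subsets of $\{y_1,\dots,y_n\}$ whose convex hull contains $0$ satisfies $N \le \frac{1}{2^d(d+1)!}\,n^{d+1} + O(n^d)$.

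The main combinatorial input is a sign-of-dependence lemma: for any $d+1$ vectors in general position in $\R^d$, the unique (up to scalar) linear dependence $\sum_j \lambda_j y_j = 0$ has all $\lambda_j \neq 0$, and for $\epsilon \in \{\pm 1\}^{d+1}$ the condition $0 \in \conv(\epsilon_0 y_0, \dots, \epsilon_d y_d)$ is equivalent to $\epsilon = \pm \sign(\lambda)$. So exactly $2$ of the $2^{d+1}$ sign-flipped versions of each $(d+1)$-tuple contain the origin in their convex hull. Double-counting pairs $(T,\epsilon)$ with $0 \in \conv(\epsilon\cdot T)$ then gives the exact total $2\binom{n}{d+1}$, which identifies the average of $N$ over the $2^n$ sign-flipped configurations $\{\xi_i y_i\}$ as $\binom{n}{d+1}/2^d$.

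The main obstacle is to convert this average into the required upper bound for the \emph{specific} $S$ at hand. My approach is a charging argument: for each $T$ with $0 \in \conv(T)$ and each vertex $y_j \in T$, pair $T$ with $(F,y_j)$, $F := T \setminus \{y_j\}$, giving $(d+1)N$ ordered pairs. Since $0 \notin \conv(F)$ in general position, the condition $0 \in \conv(F \cup \{y_j\})$ is equivalent to $y_j$ lying in the simplicial cone $\bigl\{\sum \beta_i(-f_i):\beta_i \ge 0\bigr\}$, which is one of the $2^d$ ``sign cones'' $C_\eta(F) := \{\sum_i \eta_i\beta_i f_i : \beta_i \ge 0\}$, $\eta \in \{\pm 1\}^d$, partitioning $\R^d$. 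We obtain
\[
(d+1)N = \sum_{F} \bigl|C_{(-1,\dots,-1)}(F) \cap (S \setminus F)\bigr|,
\]
and since $\sum_F\sum_\eta |C_\eta(F) \cap (S\setminus F)| = (n-d)\binom{n}{d}$ counts each pair $(y,F)$ with $y \notin F$ exactly once, the hard step is to show that no single sign cone can carry more than a $2^{-d}$ share of this total, up to an $O(n^d)$ error. I expect this to follow from the combinatorial symmetry of the $2^d$ sign cones together with a careful analysis of extremal configurations, ruling out severe imbalances; this yields $N \le \binom{n}{d+1}/2^d + O(n^d) = \frac{1}{2^d(d+1)!}\,n^{d+1} + O(n^d)$.
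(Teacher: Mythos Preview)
First, note that the paper does not actually prove this theorem: it is quoted as a known result of K\'arteszi (for $d=2$) and B\'ar\'any (general $d$), and the sentence following it invoking Wendel's lemma concerns only the asymptotic \emph{tightness} of the bound, not the upper bound itself. So there is no in-paper argument to compare against; I assess your proposal on its own.

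Your reduction to the sphere and your use of Wendel's lemma to show that, averaged over all $2^n$ global sign flips of $S$, the expected number of origin-containing simplices is exactly $\binom{n}{d+1}/2^d$, are both correct. The gap is in the final paragraph, and it is a genuine one. You correctly obtain $(d+1)N=\sum_F\bigl\lvert C_{(-1,\dots,-1)}(F)\cap(S\setminus F)\bigr\rvert$ and $\sum_F\sum_\eta\lvert C_\eta(F)\cap(S\setminus F)\rvert=(n-d)\binom nd$, and then declare the ``hard step'' to be that the all-minus term carries at most a $2^{-d}$ share of this total. But since $(n-d)\binom nd=(d+1)\binom{n}{d+1}$, that ``hard step'' is literally the inequality $(d+1)N\le 2^{-d}(d+1)\binom{n}{d+1}+O(n^d)$, i.e., the theorem itself. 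The charging reformulation is circular; it has not moved you closer to the goal.

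The proposed mechanism, ``combinatorial symmetry of the $2^d$ sign cones,'' is moreover not available. Already for $d=1$, with $k$ negative and $n-k$ positive points, one has $\sum_F\lvert C_{-1}(F)\cap(S\setminus F)\rvert=2k(n-k)$ while $\sum_F\lvert C_{+1}(F)\cap(S\setminus F)\rvert=k(k-1)+(n-k)(n-k-1)$; at $k=n/2$ the all-minus sum is the strictly larger of the two. The bound $2k(n-k)\le n^2/2$ is of course true, but it comes from the AM--GM inequality, not from any symmetry between the cones. In higher dimensions the genuine proofs likewise require an extremal/convexity ingredient (for $d=2$ one lower-bounds the number of triples \emph{not} containing the origin by $\sum_i\binom{a_i}{2}$, where $a_i$ counts points in the open half-plane bounded by the line through $0$ and $y_i$, and applies Jensen using $\sum_i a_i=\binom n2$). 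Sign-flip averaging alone cannot be upgraded to a pointwise bound without such an additional idea.
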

It follows without difficulty from a result of Wendel
(reproduced as Lemma~\ref{lemma_wendel} below)
that this bound is asymptotically
attained with high probability by points chosen
uniformly at random from the unit sphere.
Alternatively, as was kindly pointed out to us by Uli Wagner,
the tightness also follows by considering the Gale transform of
the  polar of a cyclic polytope;
see, e.g., Welzl \cite{Welzl-enteringleaving}
for the relevant background.

B\'ar\'any's bound implies that
\begin{equation*}
c_d \le {1\over 2^d(d+1)!},
\end{equation*}
which, to our knowledge, was the best known upper bound on $c_d$ for
all $d\ge 3$.

For $d=2$, Boros and F\"uredi \cite{boros_furedi_ptintriag} claimed
the upper bound $c_2 \le 1/27$ (which would be tight), but it turns
out that the construction in their paper gives only $c_2 \le 1/27 +
1/729$ (see Appendix~\ref{app_BF_construction} of this paper).

Our first result is an improved upper bound for $c_d$ for every $d$
(and the first ``non-trivial" one, in the sense that it refers to a
specific construction):

\begin{theorem}\label{momentcurvthm}
For every fixed $d\ge 2$ and every $n$ there exists an $n$-point set
$S\subset \R^d$ such that no point $p\in \R^d$ is contained in more
than $(n/(d+1))^{d+1} + O(n^d)$ $d$-simplices spanned by $S$. Thus,
\begin{equation}\label{eq_cd_upper_bd}
c_d \le (d+1)^{-(d+1)}.
\end{equation}
Moreover, such an $S$ can be chosen in convex position.
\end{theorem}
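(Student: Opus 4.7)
The plan is to take $S$ to consist of $n$ points on the moment curve $\gamma(t)=(t,t^2,\ldots,t^d)$ in $\R^d$, placed at parameters $t_1<\cdots<t_n$ arranged in $d+1$ tight clusters of nearly equal size, concentrated around well-separated reference values $\tau_0<\tau_1<\cdots<\tau_d$. Since $\gamma$ is a convex curve in $\R^d$, such an $S$ is automatically in convex position, handling the ``moreover'' clause of the theorem.

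First, I would classify all $(d+1)$-tuples of points of $S$ by their \emph{cluster profile}: call a tuple \emph{aligned} if it contains exactly one point from each cluster and \emph{unaligned} otherwise. The number of aligned tuples equals $\prod_{k=0}^d n_k$, where $n_k$ is the size of cluster~$k$; since $\sum_k n_k=n$, the AM-GM inequality gives $\prod_k n_k\le (n/(d+1))^{d+1}$, with equality when the $n_k$ are equal. In particular, no matter what $p$ is, the number of aligned simplices containing $p$ is at most $(n/(d+1))^{d+1}$, which will be the leading term.

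Second, for the unaligned contribution, observe that every unaligned tuple has some cluster supplying at least two vertices and some cluster supplying none, so the corresponding simplex lies in a thin slab around a proper face of the reference simplex $\conv\{\gamma(\tau_0),\ldots,\gamma(\tau_d)\}$. For sufficiently small cluster diameters, this geometric restriction should force at most $O(n^d)$ such simplices to contain any given $p$: a case analysis over the finitely many cluster profiles (each specifying how many vertices come from each cluster) shows that, for each profile, requiring $p$ to lie in the simplex effectively pins one degree of freedom of the tuple, cutting the count from $O(n^{d+1})$ down to $O(n^d)$.

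The main obstacle will be making this last estimate rigorous uniformly in $p$, particularly for points $p$ that lie near a face of the reference simplex or inside one of the clusters, where many thin unaligned simplices might a~priori contain $p$ simultaneously. The resolution requires choosing cluster diameters small enough relative to the separations $\tau_k-\tau_{k-1}$ and relative to $n$, and carrying out a careful dimension-counting argument that exploits how the tangent directions of $\gamma$ at the $\tau_k$ constrain the perpendicular extent of the unaligned simplices. Once the $O(n^d)$ bound on each profile's contribution is established, summing over the bounded number of profiles yields the claimed $O(n^d)$ error, and adding this to the $\prod_k n_k\le(n/(d+1))^{d+1}$ bound on the aligned contribution immediately gives the inequality $c_d\le(d+1)^{-(d+1)}$.
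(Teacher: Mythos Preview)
Your treatment of the unaligned simplices has a genuine gap, and the heuristic that ``requiring $p$ to lie in the simplex effectively pins one degree of freedom of the tuple'' is simply false here. For a fixed unaligned profile the simplices all degenerate toward the same proper face $F$ of the reference simplex, and a point $p$ placed on $F$ will lie in a \emph{positive fraction} of them---$\Theta(n^{d+1})$, not $O(n^d)$---no matter how tight the clusters are. Concretely, take $d=2$ with cluster $A=\{\gamma(i\varepsilon):1\le i\le m\}$ on the parabola and a much tighter cluster $B$ of $m$ points near $\gamma(1)$, where $m=n/3$. A direct computation shows that the line through $a_i$ and $b\in B$ meets $x=\tfrac12$ at height $\approx\tfrac12-\tfrac{i\varepsilon}{2}$, so the point $p=(\tfrac12,\tfrac12-\tfrac{m\varepsilon}{4})$ lies in the triangle $a_ia_jb$ whenever $i\le m/2\le j$; that is already $\Theta(m^2)$ triangles for each $b$, hence $\Theta(m^3)=\Theta(n^3)$ triangles with profile $AAB$. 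Shrinking $\varepsilon$ only moves the bad $p$ closer to the segment $\gamma(0)\gamma(1)$; it does not reduce the count. So your proposed ``dimension-counting'' resolution cannot succeed.

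Indeed, fixed-cluster constructions of this kind appear not to achieve the bound at all: the paper's Appendix analyzes the closely related three-cluster construction of Boros and F\"uredi and finds a point lying in $(\tfrac1{27}+\tfrac1{729})n^3$ triangles, strictly more than $n^3/27$. The paper's proof is therefore organized quite differently. There are no clusters; instead one takes a single ``stretched'' sequence $p_1,\ldots,p_n$ whose coordinates grow so rapidly that any two entries are comparable under a relation $\ll$. The key idea is that the partition of $S$ into $d+1$ classes is \emph{not fixed in advance} but depends on the test point $r$: each $p_i$ is assigned a ``type'' in $\{0,\ldots,d\}$ according to which coordinates of $p_i$ exceed the corresponding coordinates of $r$, and a determinant-sign argument shows that if $r\in\conv\{q_0,\ldots,q_d\}$ with the $q_\ell$ listed in increasing index then $q_\ell$ must have type exactly~$\ell$. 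Hence every simplex containing $r$ uses one point of each type, and AM--GM gives the bound $(n/(d+1))^{d+1}$ directly; the $O(n^d)$ error comes only from discarding at most $2d$ points whose type relative to $r$ is ambiguous.
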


In particular, the planar bound of $c_2 = 1/27$ is tight, after all.

\subsection{The First Selection Lemma and centerpoints}

If $S$ is an $n$-point set in $\R^d$ and $p\in \R^d$, we say that
$p$ lies at \emph{depth $m$ with respect to $S$} if every halfspace
that contains $p$ contains at least $m$ points of $S$. A classical
result of Rado \cite{rado} states that there always exists a point
at depth $n/(d+1)$. Such a point is called a \emph{centerpoint}.

Wagner proved the bound (\ref{eq_wagner_bd}) by showing the
following:

\begin{theorem}[\cite{wagner_thesis}]\label{thm_wagner_alpha}
If $S$ is an $n$-point set in $\R^d$ and $p\in\R^d$ is a point at
depth $\alpha n$ with respect to $S$, then $p$ is contained in at
least
\begin{equation*}
\bigl((d+1)\alpha^d-2d\alpha^{d+1}\bigr)\frac{n^{d+1}}{(d+1)!} -
O(n^d)
\end{equation*}
$d$-simplices spanned by $S$.\footnote{This is obtained by setting
$k=0$ in the lower bound for $f_k(\mu, \mathbf o)$ in the proof of
Theorem~4.32 in \cite{wagner_thesis}.}
\end{theorem}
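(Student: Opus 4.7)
Without loss of generality place $p$ at the origin. For a direction $u\in S^{d-1}$ let $f(u)$ be the number of points of $S$ in the open halfspace $\{x:\langle u,x\rangle>0\}$; applying the depth hypothesis to both $u$ and $-u$ forces $\alpha n\le f(u)\le(1-\alpha)n$ for every $u$ generic with respect to $S$, and the non-generic contribution is absorbed into the $O(n^d)$ error. Observe that a $(d+1)$-subset $T\subset S$ contains $p$ in its convex hull iff no direction $u$ puts all vertices of $T$ in the positive open halfspace.

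The plan is to combine a double-counting identity over facets with the depth bound. For each $d$-subset $F\subset S$, let $H_F$ denote the affine hyperplane spanned by $F$, and let $a(F)$ (resp.\ $b(F)$) count the points of $S\setminus F$ on the same (resp.\ opposite) side of $H_F$ as $p$. Translating $H_F$ through $p$ and invoking the depth bound on the resulting parallel hyperplane gives $\alpha n-d\le a(F),b(F)\le(1-\alpha)n$. Analysing the unique affine dependency of $T\cup\{p\}$, a sign comparison shows that the number of facets of $T$ whose opposite vertex lies on the same side of its hyperplane as $p$ equals $d+1$ when $p\in\conv T$, and equals $d+1-j$ otherwise, where $j+1$ is the size of the Radon partition side of $T\cup\{p\}$ containing $p$. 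Letting $G$ be the number of good simplices and $b_j$ the number of bad ones with Radon parameter $j\in\{1,\ldots,d\}$, this yields the first-moment identity
\[
\sum_{F\in\binom{S}{d}}a(F)=(d+1)G+\sum_{j=1}^{d}(d+1-j)\,b_j.
\]

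The concluding step combines this with higher-moment analogues obtained from $\sum_F\binom{a(F)}{i}$ for $i=2,\ldots,d$ — equivalently, from integrating $\binom{f(u)}{k}\binom{n-f(u)}{d+1-k}$ over $S^{d-1}$ for each $k$. Each such identity yields a linear constraint on $G$ and the $b_j$'s, and minimising $G$ subject to all constraints together with $G+\sum_j b_j=\binom{n}{d+1}$ and the depth bounds on $a(F)$ should produce the coefficient $(d+1)\alpha^d-2d\alpha^{d+1}$. I expect the main obstacle to be the combinatorial bookkeeping of how each Radon type $b_j$ enters each higher-moment identity: the first-moment identity alone gives a lower bound that is strictly negative at $\alpha=1/2$, whereas the full programme must correctly recover the sharp value $1/2^d$ that matches Theorem~\ref{thm_any_S} at the critical value $\alpha=1/2$.
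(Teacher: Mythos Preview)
The paper does not prove this theorem at all: it is quoted from Wagner's thesis, with the footnote pointing to the exact place (the lower bound for $f_k(\mu,\mathbf o)$ in the proof of Theorem~4.32 there, specialised to $k=0$). So there is no ``paper's own proof'' to compare against; Wagner's argument is measure-theoretic, working with a continuous $\mu$ and an integral formula for the simplex count rather than with Radon types.

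As for your proposal itself, it is a plan rather than a proof, and you say so yourself (``should produce the coefficient'', ``I expect the main obstacle to be''). The entire content of the theorem is the specific coefficient $(d+1)\alpha^d-2d\alpha^{d+1}$, and that is precisely the part you leave undone. Two concrete gaps:
\begin{itemize}
\item Your ``higher-moment'' quantities $\sum_F\binom{a(F)}{i}$ for $i\ge 2$ do not decompose as linear combinations of $G,b_1,\ldots,b_d$. The sum $\sum_F\binom{a(F)}{i}$ counts pairs (a $d$-subset $F$, an $i$-subset on the $p$-side of $H_F$), which is naturally a statistic of $(d+i)$-subsets of $S$, not of $(d+1)$-subsets with a prescribed Radon type. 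So the promised system of linear constraints on $(G,b_1,\ldots,b_d)$ is not actually there. The alternative you mention, integrating $\binom{f(u)}{k}\binom{n-f(u)}{d+1-k}$ over $S^{d-1}$, is a genuinely different object and is \emph{not} equivalent to the facet sums; it is in fact much closer to what Wagner does, but you do not develop it.
\item Even granting a workable system of constraints, the optimisation step is asserted, not carried out. Your own check that the first-moment identity alone gives a negative bound at $\alpha=1/2$ shows that the heart of the matter lies exactly in this missing calculation.
\end{itemize}
In short: the first-moment identity and the Radon-type bookkeeping are correct and a reasonable starting point, but the route you sketch from there to the sharp constant is not yet a proof, and the ``equivalently'' linking your two higher-moment formulations is not justified.
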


This, together with Rado's Centerpoint Theorem, immediately implies
(\ref{eq_wagner_bd}).

In this paper we show that Theorem~\ref{thm_wagner_alpha} cannot be
improved:

\begin{theorem}\label{wagnersharpthm}
For every $\alpha$, $0 < \alpha \le 1/2$, and every $n$, there
exists an $n$-point set $S$ in $\R^d$ such that the origin is at
depth $\alpha n$ with respect to $S$ but is contained in only
\begin{equation*}
\bigl((d+1)\alpha^d - 2d\alpha^{d+1} \bigr)\frac{n^{d+1}}{(d+1)!} +
O(n^d)
\end{equation*}
$d$-simplices spanned by $S$.
\end{theorem}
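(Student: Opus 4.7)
The plan is to construct $S = S_1 \cup S_2$, where $S_2$ is a tight cluster of $(1-2\alpha)n$ points in a small ball $B_\varepsilon(v)$ around some unit vector $v \in \S^{d-1}$, and $S_1 = \bigcup_{i=1}^{\alpha n}\{u_i, -u_i\}$ consists of $\alpha n$ antipodal pairs on $\S^{d-1}$, with the $u_i$'s chosen in sufficiently general position with respect to $v$. A final infinitesimal perturbation puts the whole of $S$ in general position; the resulting changes to the simplex counts are absorbed in the $O(n^d)$ error. The antipodal symmetry ensures that every hyperplane through the origin has exactly $\alpha n$ points of $S_1$ on each side, while any closed halfspace $H$ with the origin on its boundary contains either all of $S_2$ (when $v$ lies in its interior) or none of it; hence $\min_H |H \cap S| = \alpha n$, attained when the inward normal of $H$ is close to $-v$, so the origin has depth $\alpha n$.

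To count the simplices of $S$ containing the origin, partition the $(d{+}1)$-subsets $\sigma \subseteq S$ by $k := |\sigma \cap S_2|$. For $k \ge 2$, letting $\varepsilon \to 0$ collapses the $k$ cluster vertices to $v$, so the limiting simplex has dimension at most $d-k+1 \le d-1$; by the genericity of the $u_i$'s the origin stays at positive distance from every such limiting simplex, and so for $\varepsilon$ small enough no $\sigma$ with $k \ge 2$ contains the origin. For $k = 0$, the combinatorial form of Wendel's lemma gives that, for each choice of $d+1$ of the antipodal pairs, exactly $2$ of the $2^{d+1}$ sign assignments produce a simplex containing the origin; so the contribution from $(d{+}1)$-subsets of $S_1$ with no antipodal pair is $2\binom{\alpha n}{d+1} = 2\alpha^{d+1} n^{d+1}/(d{+}1)! + O(n^d)$, while the $O(n^d)$ subsets that do contain an antipodal pair contribute at most $O(n^d)$ more. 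For $k = 1$, the same limiting argument reduces ``$T \cup \{p\}$ contains the origin'' (with $p \in S_2$ and $T \subset S_1$ of size $d$) to ``$-v$ lies in the simplicial cone spanned by $T$''; for any choice of $d$ antipodal pairs, the $2^d$ sign assignments produce $2^d$ simplicial cones partitioning $\R^d$, so exactly one of them contains the generic point $-v$. Summing over $p \in S_2$ yields $(1-2\alpha)n\binom{\alpha n}{d} + O(n^d) = [(d{+}1)\alpha^d - 2(d{+}1)\alpha^{d+1}]\,n^{d+1}/(d{+}1)! + O(n^d)$. Adding the $k=0$ and $k=1$ contributions produces $[(d{+}1)\alpha^d - 2d\alpha^{d+1}]\,n^{d+1}/(d{+}1)! + O(n^d)$, as required.

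The main technical obstacle is controlling the general-position perturbation together with the book-keeping of the exceptional subsets. A $(d{+}1)$-subset containing an antipodal pair $\{u_i,-u_i\}$ has the origin on a $1$-face of the corresponding unperturbed simplex, and an infinitesimal perturbation can flip its origin-containment status either way; the same is true for the few near-degenerate subsets that appear in the $k \ge 2$ analysis. The key estimate is that the number of such exceptional $(d{+}1)$-subsets is $O(n^d)$, so their contribution fits inside the error term. A minor additional step is to verify that the perturbation moves the depth of the origin by at most an $O(1)$ additive amount, which can be corrected by repositioning a constant number of points in the cluster $S_2$.
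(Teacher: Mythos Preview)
Your construction and counting are exactly those of the paper: the set $A\cup(-A)\cup P$ with $|A|=\alpha n$ antipodal pairs and a cluster $P$ of size $(1-2\alpha)n$, the case split by $k=|\sigma\cap P|$, Wendel's lemma for $k=0$, and the ``one cone out of $2^d$'' observation for $k=1$ (which is just a restatement of how the paper applies Wendel in that case). The only difference is that you add a final perturbation to put $S$ in general position; the paper does not do this, since the theorem as stated does not require general position, so the exact antipodal structure can be kept and the book-keeping about flipped simplices and depth correction becomes unnecessary.
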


Thus, the approach of taking an arbitrary centerpoint cannot yield
any lower bound better than (\ref{eq_wagner_bd}) for the First
Selection Lemma.

\subsection{Stabbing $(d-k)$-simplices by $k$-flats}

The First Selection Lemma can be generalized as follows: If $S$ is
an $n$-point set in $\R^d$ and $k$ is an integer, $0\le k < d$, then
there exists a $k$-flat that intersects at least $c_{d,k} n^{d-k+1}
- O(n^{d-k})$ of the $(d-k)$-simplices spanned by $S$, for some
positive constants $c_{d,k}$ that depend only on $d$ and $k$. (By a
\emph{$k$-flat} we mean a $k$-dimensional affine subspace of
$\R^d$.)

The problem is to determine the maximum values of the constants
$c_{d,k}$. Trivially we have $c_{d,k} \ge c_{d-k}$: Simply project
$S$ into an arbitrary generic subspace of dimension $d-k$, and then
apply the First Selection Lemma.

Here we derive a nontrivial lower bound for the case $k = d-2$ (this
is the only case for which we could obtain good lower bounds):

\begin{theorem}\label{thmsel}
If $S$ is a $n$-point set in $\R^d$, then there is a $(d-2)$-flat
$\ell$ that intersects at least $\frac1{24}(1-1/(2d-1)^2)n^3 -
O(n^2)$ triangles spanned by~$S$. Thus,
\begin{equation}
c_{d,d-2} \ge \frac 1{24}\left(1-\frac 1{(2d-1)^2}\right). \label{eq_bd_cdd2}
\end{equation}
\end{theorem}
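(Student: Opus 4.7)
The plan is to derive Theorem~\ref{thmsel} from the equipartition theorem promised in the abstract, followed by a planar counting argument. Applying the equipartition to (a slight smoothing of) the counting measure of~$S$, we obtain a $(d-2)$-flat~$\ell$ and $m:=2d-1$ hyperplanes through~$\ell$ that carve $\R^d$ into $2m$ open wedges, each containing $N:=n/(2m)$ points of~$S$ up to an additive $O(1)$. Projecting $\R^d$ onto $\R^d/\ell\cong\R^2$, the wedges become $2m$ equal angular sectors around the origin, and a triangle of $S$ is stabbed by $\ell$ exactly when its projection in $\R^2$ contains the origin. It thus suffices to prove: \emph{if $n$ points in $\R^2$ lie with exactly $N$ points in each of $2m$ equal sectors around the origin, then at least $\frac{1}{24}(1-1/m^2)n^3-O(n^2)$ of the spanned triangles contain the origin.}

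To this end, classify each unordered sector-triple $\{a,b,c\}$ by its maximum cyclic gap $g^{\max}\in\{1,\dots,2m-2\}$. If $g^{\max}<m$ the three sectors span more than a half-plane, so every choice of one point per sector gives a triangle containing the origin (the triple is \emph{always good}); if $g^{\max}>m$ the triple is \emph{always bad}; the remaining case $g^{\max}=m$ is \emph{borderline} and happens precisely when the triple contains an antipodal pair $\{s,s+m\}$. A short enumeration gives $\frac{m(m-1)(m-2)}{3}$ always-good triples and $2m(m-1)$ borderline triples. The borderline ones pair up naturally as
\[
 T_1(s,t)=\{s,\,s+t,\,s+m\}, \qquad T_2(s,t)=\{s,\,s+m,\,s+m+t\},
\]
indexed by $s\in\{0,\dots,m-1\}$ (an antipodal pair) and $t\in\{1,\dots,m-1\}$, giving $m(m-1)$ pairs in total, each triple appearing in exactly one pair.

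The crux of the argument is a cancellation identity: the two triples of each pair $(T_1(s,t),T_2(s,t))$ together contribute at least $N^3-O(N^2)$ origin-containing triangles, regardless of how the points are placed within their sectors. Fixing $s=0$ for notational simplicity, parameterize a pair of points $a\in$~sector~$0$, $c\in$~sector~$m$ by their within-sector angles $u=\theta_a$ and $v=\theta_c-\pi$, both lying in $[0,\pi/m)$. A gap-by-gap check shows that with $b\in$~sector~$t$ the triangle $abc$ contains the origin iff $v>u$, while with $b'\in$~sector~$t+m$ the triangle $acb'$ contains the origin iff $v<u$. Summing over the $N^2$ pairs $(a,c)$ and the $N$ choices of third vertex, the combined contribution is $[\#\{v>u\}+\#\{v<u\}]\cdot N\ge N^3-O(N^2)$. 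This complementarity is the heart of the proof; verifying it carefully (and absorbing boundary coincidences into the $O(n^2)$ error) is the main technical hurdle.

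Aggregating the always-good contributions and the $m(m-1)$ borderline pairs, the number of triangles of $S$ stabbed by $\ell$ is at least
\[
 \Bigl[\tfrac{m(m-1)(m-2)}{3}+m(m-1)\Bigr]N^3-O(n^2)
 =\frac{m(m^2-1)}{3}\cdot\frac{n^3}{8m^3}-O(n^2)
 =\frac{1}{24}\Bigl(1-\frac{1}{m^2}\Bigr)n^3-O(n^2),
\]
which, on substituting $m=2d-1$, gives~\eqref{eq_bd_cdd2}.
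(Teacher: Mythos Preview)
Your proof is correct and follows essentially the same route as the paper's: invoke the $(2d-1)$-fan equipartition (Theorem~\ref{thm_part_4d_2_cont}/Corollary~\ref{c_part_4d_2}), pass to the quotient plane, classify sector-triples by their maximum cyclic gap, and pair up the borderline (gap-$m$) triples---the paper packages this as Lemma~\ref{lemma_2m_parts_plane} (one point per sector, ``at least one triangle from each pair contains $x$'') followed by a double-counting step, while you count the $N^3$ choices per triple directly and make the complementarity $(v>u)$ versus $(v<u)$ explicit. One cosmetic slip: the $2m$ sectors in the quotient plane need not have \emph{equal} angular width, but your verification only uses that sectors $s$ and $s+m$ are bounded by the same two lines through the origin, so it goes through unchanged.
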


For $d=2$ this is just the planar version of First Selection Lemma
with the optimal constant of $1/27$. And as $d$ increases, the
right-hand-side of (\ref{eq_bd_cdd2}) increases strictly with $d$,
approaching $1/24$ as $d$ tends to infinity. Indeed, it is
impossible to stab more than $n^3/24$ triangles for any $d$, since
then projecting into a plane perpendicular to $\ell$ would result in
a point stabbing more than $n^3/24$ triangles in the plane,
contradicting Theorem~\ref{thm_any_S}.

Theorem~\ref{thmsel} is a consequence of the following equipartition
result, which is interesting in its own right. Given an integer $m
\ge 2$, define an \emph{$m$-fan} as a set of $m$ hyperplanes in
$\R^d$ that pass through a common $(d-2)$-flat. Then:

\begin{theorem}\label{thm_part_4d_2_cont}
For every probability measure $\mu$ on $\R^d$ that is absolutely
continuous with respect to the Lebesgue measure, there exists a
$(2d-1)$-fan that divides $\mu$ into $4d-2$ equal parts.
\end{theorem}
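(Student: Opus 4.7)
My plan is to recast the problem as finding a zero of an equivariant map on a suitable configuration space and invoke a Borsuk--Ulam-type theorem.

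First I would note that a $(d-2)$-flat $\ell\subset\R^d$ is parameterized by its direction $V\in G(d-2,d)$ together with a translate $q\in V^\perp$, and hyperplanes through $\ell$ correspond bijectively to the lines in $V^\perp\cong\R^2$ through $q$. A $(2d-1)$-fan through $\ell$ therefore equipartitions $\mu$ iff $2d-1$ concurrent lines through $q$ partition the projected planar measure $\pi_V(\mu)$ into $4d-2$ equal sectors. The candidate configuration space has dimension $(2d-4)+2+(2d-1)=4d-3$, matching the $4d-3$ independent equipartition conditions.

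For $d=2$ the direction $V$ is trivial and the claim reduces to Ceder's classical planar theorem on three concurrent lines giving six equal sectors, which is proved by Borsuk--Ulam. For $d\ge 3$, the planar equipartition by $m=2d-1\ge 5$ concurrent lines is generically unsolvable if $V$ is held fixed (parameter deficit $4d-3-(2d+1)=2d-4$), but this is exactly compensated by the $2d-4$ extra parameters obtained by varying $V$ over the Grassmannian.

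To close the argument I would parameterize ordered $(4d-2)$-tuples of half-hyperplanes emanating from a variable $(d-2)$-flat, subject to the antipodal constraint $p_{i+2d-1}=-p_i$, and define a continuous map $\Phi$ from this configuration space into $\R^{4d-2}$ sending a configuration to its vector of sector masses. An equipartition is then a point where $\Phi$ lands on the diagonal. Exploiting the natural cyclic $\Z/(4d-2)$-action that shifts the half-hyperplane labels (and the matching coordinate shift on the target), I would apply an equivariant Borsuk--Ulam theorem or an equivariant degree argument to conclude.

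The main obstacle is the equivariant topology of this configuration space, which interweaves the cyclic or dihedral symmetry of the fan with the Grassmannian factor $G(d-2,d)$: verifying that the relevant equivariant cohomological obstruction does not vanish, or computing a non-zero equivariant degree, is the crux of the proof. A direct reduction to the $(d-1)$-dimensional case via a bisecting hyperplane does not appear to work cleanly, so a genuinely topological argument seems necessary.
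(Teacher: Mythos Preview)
Your outline correctly identifies that a configuration-space/test-map argument is needed, and your dimension count is sound, but what you have written is a plan rather than a proof: you explicitly defer the one nontrivial step, namely showing that the relevant equivariant obstruction does not vanish. Without that computation the argument is incomplete, and for the space you propose---a bundle of labeled half-hyperplane configurations over $G(d-2,d)$ with a cyclic $\Z/(4d-2)$-action---this computation is far from routine; cyclic groups of composite order acting non-freely are precisely the setting in which naive Borsuk--Ulam heuristics tend to fail.

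The paper takes a sharper and rather different route. Instead of parameterizing the entire fan, it parameterizes only a pair of orthonormal vectors $(v,w)\in\Stief_{d,2}$, and from $(v,w)$ it \emph{uniquely} reconstructs a fan of $4d-2$ half-hyperplanes: $v$ determines a bisecting hyperplane $h$, $w$ determines the direction of the $(d-2)$-flat $\ell$ inside $h$, and then the position of $\ell$ along with all $4d-2$ half-hyperplanes are pinned down by normalizing conditions (equipartition of $\mu$ into $4d-2$ wedges, plus one angle-balancing equation). The test map records the $2d-3$ residual angle mismatches that prevent opposite half-hyperplanes from aligning into full hyperplanes, so it lands in $\R^{2d-3}$. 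The symmetry group is not cyclic but $\Z_2\times\Z_2$, acting by $(v,w)\mapsto(\pm v,\pm w)$ on the source and by sign-flips on two complementary blocks of coordinates in the target. Nonexistence of a zero-free equivariant map $\Stief_{d,2}\to\S^{2d-4}$ is then obtained from the Fadell--Husseini ideal-valued cohomological index: the $\Z_2^2$-index of $\S^{2d-4}$ with this action is the principal ideal generated by $t_1^{d-1}t_2^{d-2}$, while by a result of Fadell the index of $\Stief_{d,2}$ does not contain that monomial.

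Thus the ingredients missing from your proposal are, first, the reduction of the configuration space from a $(4d-3)$-dimensional bundle to the $(2d-3)$-dimensional Stiefel manifold by pre-solving most of the constraints continuously; and second, the replacement of the cyclic symmetry by a $\Z_2\times\Z_2$ symmetry for which the required index computation is already available in the literature.
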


For $d=2$ this theorem specializes to a result of Ceder \cite{ceder}
(also see Buck and Buck \cite{buckbuck} for a special case).

We also show that $2d-1$ is the largest possible number of
hyperplanes in Theorem~\ref{thm_part_4d_2_cont}:

\begin{theorem}\label{thm_part_optimal}
For every integer $m\ge 2d$ there exists an absolutely continuous
probability measure $\mu$ on $\R^d$ that cannot be partitioned into
$2m$ equal parts by an $m$-fan.
\end{theorem}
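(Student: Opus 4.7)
The plan is a transversality (dimension-count) argument. First I would identify the space $\mathcal{F}$ of $m$-fans in $\R^d$ as a smooth manifold and compute its dimension. An $m$-fan consists of a $(d-2)$-flat $L\subset\R^d$ together with an unordered collection of $m$ hyperplanes containing~$L$. The affine Grassmannian of $(d-2)$-flats in $\R^d$ has dimension $(d-1)\cdot(d-(d-2))=2d-2$, and a hyperplane through a fixed $L$ is specified by a line in the two-dimensional quotient $\R^d/L$, contributing one real parameter. Thus $\dim\mathcal{F}=D:=(2d-2)+m$, and the inequality $D<2m-1$ is exactly equivalent to $m\ge 2d$; this is the source of all the slack.

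Next I would set up a finite-dimensional family of absolutely continuous test measures. Fixing a smooth nonnegative bump $\phi$ of compact support on $\R^d$ with $\int\phi=1$, for $x=(x_1,\ldots,x_N)\in(\R^d)^N$ I let $\mu_x$ be the absolutely continuous measure with density $\frac{1}{N}\sum_{i=1}^N\phi(y-x_i)$. This yields an $Nd$-parameter family of admissible $\mu$'s. I then consider the smooth evaluation map
\[
\Phi\:(\R^d)^N\times\mathcal{F}\to\R^{2m-1},\qquad
\Phi(x,f)=\bigl(\mu_x(W_1(f)),\,\ldots,\,\mu_x(W_{2m-1}(f))\bigr),
\]
where $W_1,\ldots,W_{2m}$ are the wedges cut out by $f$ in their natural cyclic order around~$L$. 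An $m$-fan $f$ equipartitions $\mu_x$ precisely when $\Phi(x,f)=\mathbf{q}:=(1/(2m),\ldots,1/(2m))$.

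The key step is to check that, at any nondegenerate $(x,f)$, the partial differential $\partial\Phi/\partial x$ is already surjective onto $\R^{2m-1}$. Perturbing a single bump $x_i$ that sits near the shared boundary of two adjacent wedges $W_j$ and $W_{j+1}$ transfers mass between $W_j$ and $W_{j+1}$ along the outward normal to the separating hyperplane and leaves all other wedge masses unchanged to first order; placing at least one bump close to each of the $2m$ wedge boundaries realizes the $2m$ elementary transfer vectors $e_j-e_{j+1}$ (indices cyclic), whose span is the $(2m-1)$-dimensional hyperplane $\{v\in\R^{2m}:\sum v_j=0\}$. Granting this, $\Phi^{-1}(\mathbf{q})$ is a smooth submanifold of $(\R^d)^N\times\mathcal{F}$ of dimension $Nd+D-(2m-1)$, and since $m\ge 2d$ forces $D-(2m-1)\le-1$, its dimension is strictly less than $Nd$. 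By Sard's theorem the projection of $\Phi^{-1}(\mathbf{q})$ onto $(\R^d)^N$ has Lebesgue measure zero, so for almost every $x$ no $m$-fan equipartitions~$\mu_x$; any such $x$ gives the desired measure.

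The main obstacle is making the submersion claim uniform in $f$ and handling degenerate fans cleanly: fans whose flat passes through the support of some bump, whose hyperplanes coincide, or whose hyperplanes are tangent to bumps, etc. This degenerate locus is a proper semi-algebraic subset of $\mathcal{F}$ of strictly smaller dimension, so it contributes only a measure-zero set of bad $x$ after stratifying $\mathcal{F}$ and running the same Sard/dimension argument on each smooth stratum. Everything else is routine: mass transfer is linear in the bump displacement, the elementary transfer vectors are geometrically transparent, and the entire dimension arithmetic is a one-line consequence of $D<2m-1$.
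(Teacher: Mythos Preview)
Your dimension count is the same as the paper's: an $m$-fan in $\R^d$ has $2d+m-2$ degrees of freedom, and $2d+m-2<2m-1$ is precisely $m\ge 2d$. But the paper implements this count quite differently and more simply. It first proves a preparatory lemma: whenever $t\ge 2d+m-1$ there is a $t$-point set $T\subset\R^d$ not coverable by any $m$-fan. This is a semi-algebraic dimension argument---form the incidence variety of pairs (fan, $t$-tuple of points on the fan), note its dimension is $(2d+m-2)+t(d-1)<td$, and apply Tarski--Seidenberg to the projection---no smoothness, submersions, or Sard required. Taking $t=2m-1$, a compactness argument yields an $r>0$ such that every $m$-fan misses some ball $B_r(p)$, $p\in T$, entirely. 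The measure $\mu$ is then the uniform measure on $\bigcup_{p\in T}B_r(p)$: in any $m$-fan partition some wedge contains a whole ball and hence has mass at least $1/(2m-1)>1/(2m)$.

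Your transversality route has a genuine gap. For the parametric-Sard step you need $\mathbf{q}$ to be a regular value of the \emph{full} map $\Phi$, i.e., $d\Phi$ surjective at \emph{every} $(x,f)\in\Phi^{-1}(\mathbf{q})$. Your sentence about ``placing at least one bump close to each of the $2m$ wedge boundaries'' exhibits one $(x,f)$ where $\partial\Phi/\partial x$ is onto; it does not verify this uniformly. With compactly supported $\phi$, the condition ``every bump support lies in the interior of some wedge'' is \emph{open} in $(\R^d)^N\times\mathcal{F}$, not lower-dimensional, and there $d\Phi=0$. If moreover $2m\mid N$, such $(x,f)$ can lie in $\Phi^{-1}(\mathbf{q})$ (each wedge gets exactly $N/(2m)$ bumps), so $\Phi^{-1}(\mathbf{q})$ contains an open set and its projection to $(\R^d)^N$ has positive measure---the argument collapses (e.g.\ $N=2m$ bump centers at the vertices of a regular $2m$-gon in a $2$-plane are separated by an obvious fan, stably under perturbation). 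Your list of degeneracies really lives in $(\R^d)^N\times\mathcal{F}$, not in $\mathcal{F}$ alone, and omits exactly this interior case. The hole is patchable---take $N$ coprime to $2m$ and analyse the remaining strata, or use bumps with full support such as Gaussians and argue surjectivity via the boundary-integral formula for $\partial\Phi/\partial x$---but it is not ``routine'' as claimed, and the paper's explicit construction sidesteps all of it.
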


\section{The construction for Theorem \ref{momentcurvthm}}

We now prove Theorem~\ref{momentcurvthm} by constructing a suitable
point set $S$. Given real numbers $a, b > 1$, let $a \ll b$ mean
that $f(a) < b$ for some fixed, sufficiently large function $f$
(concretely, we can take $f(x) = (d+1)! x^{d+1}$). Our point set is
$S = \{ p_1, \ldots, p_n\}$, with
\begin{equation*}
p_i = (p_{i1}, p_{i2}, \ldots, p_{id}) \in (1,\infty)^d,
\end{equation*}
where the components $p_{ij}$ satisfy
\begin{equation*}
p_{ij} \ll p_{i'j'} \qquad \text{whenever} \qquad j < j', \quad
\text{or}\quad j = j' \text{ and } i<i'
\end{equation*}
(so the ordering of the $p_{ij}$ is first by the coordinate index
$j$ and then by the point index $i$). The idea of taking points
separated by rapidly-increasing distances is borrowed from Boros and
F\"uredi's planar construction \cite{boros_furedi_ptintriag}.
However, their construction is more complicated, with points grouped
into three clusters; see Appendix~\ref{app_BF_construction}.

\begin{figure}
\centerline{\includegraphics{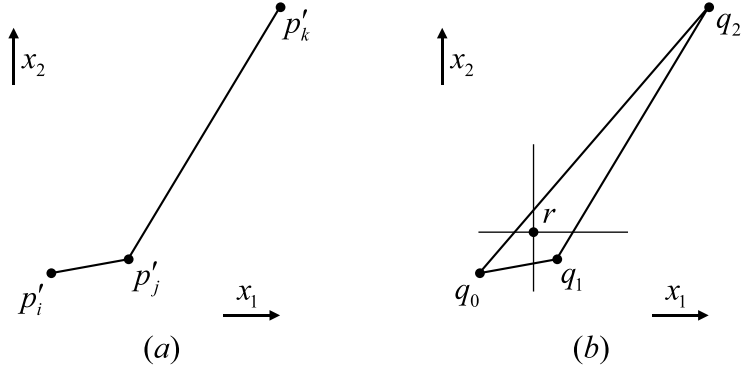}}
\caption{\label{fig_planar_case}(\emph{a}) In the $x_1 x_2$-plane, the
segment $p'_i p'_j$ has a smaller slope than the segment $p'_j p'_k$.
(\emph{b}) In the planar case, the point $r$ must lie above-right of
$q_0$, above-left of $q_1$, and below-left of $q_2$.}
\end{figure}

\begin{lemma}
The set $S$ is in convex position.
\end{lemma}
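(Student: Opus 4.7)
The plan is to show that every point $p_k$ is extreme in $\conv(S)$, which is equivalent to $S$ being in convex position. I will suppose toward a contradiction that $p_k = \sum_{i \ne k} \lambda_i p_i$ for some $\lambda_i \ge 0$ with $\sum_i \lambda_i = 1$, and derive a contradiction using only the $d$-th and $(d-1)$-st coordinates of this vector equation. The intuition is that the rapid-growth condition makes these two coordinates of $p_k$ impossible to reconstruct from the corresponding coordinates of the other points.

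The first step is to compare the $d$-th coordinates. Since $p_{i,d} \ge p_{k+1,d}$ for every $i > k$, the equation $p_{k,d} = \sum_{i\ne k} \lambda_i p_{i,d}$ forces
\[ \lambda_{>k} := \sum_{i>k}\lambda_i \;\le\; \frac{p_{k,d}}{p_{k+1,d}}, \]
which the $\ll$-relation makes minuscule. This already handles the boundary cases: if $k=n$ then $\lambda_{>k}=0$ and the equation gives $p_{n,d}\le p_{n-1,d}$, contradicting $p_{n-1,d}\ll p_{n,d}$; if $k=1$ then $\lambda_{<k}=0$ forces $\lambda_{>k}=1$, contradicting the bound.

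For an interior index $1 < k < n$ I would apply the same bookkeeping to the $(d-1)$-st coordinate:
\[ p_{k,d-1} \;\le\; \lambda_{<k}\, p_{k-1,d-1} + \lambda_{>k}\, p_{n,d-1} \;\le\; p_{k-1,d-1} + \frac{p_{k,d}}{p_{k+1,d}}\, p_{n,d-1}. \]
The first summand is at most $\tfrac12 p_{k,d-1}$ directly from $p_{k-1,d-1} \ll p_{k,d-1}$. For the second summand, substituting $p_{k+1,d} > (d+1)!\,p_{k,d}^{d+1}$ together with $p_{n,d-1} < p_{k,d}^{1/(d+1)}$ (a consequence of $p_{n,d-1} \ll p_{1,d} \le p_{k,d}$) shows it is at most $1/(d+1)! \le \tfrac12$, hence less than $\tfrac12 p_{k,d-1}$ since $p_{k,d-1} > 1$. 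Adding the two bounds produces $p_{k,d-1} < p_{k,d-1}$, the desired contradiction.

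The only thing to verify carefully is that the chosen growth $f(x)=(d+1)!\,x^{d+1}$ is strong enough to turn each of the above inequalities into strict ones. With every coordinate exceeding $1$ this will be routine; there is no real subtlety, and the whole argument is driven by the single fact that $p_{k+1,d}$ is super-polynomially larger than anything one can assemble out of the lower-indexed coordinates.
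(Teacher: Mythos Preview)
Your argument is correct. The bound $\lambda_{>k}\le p_{k,d}/p_{k+1,d}$ from the $d$-th coordinate, combined with $p_{n,d-1}\ll p_{1,d}\le p_{k,d}$ to control the single ``large'' contribution in the $(d-1)$-st coordinate, does exactly what you claim, and the explicit growth $f(x)=(d+1)!\,x^{d+1}$ is comfortably sufficient for all the strict inequalities you need.

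The paper takes a slightly different route. It projects onto the \emph{first} two coordinates and shows the images $p_i'=(p_{i1},p_{i2})$ lie on an $x_1$-monotone convex curve by comparing slopes: for $i<j<k$ one checks $(p_{k2}-p_{j2})(p_{j1}-p_{i1})>(p_{j2}-p_{i2})(p_{k1}-p_{j1})$, which follows because the left side is essentially $p_{k2}p_{j1}$ and dwarfs everything else. Your argument instead works with the \emph{last} two coordinates and verifies extremality directly via convex combinations. Structurally the two proofs use the same information---any two consecutive coordinate blocks in the $\ll$-chain suffice---and are of comparable length; the paper's slope inequality is perhaps a bit more transparent geometrically, while your coefficient bound $\lambda_{>k}\le p_{k,d}/p_{k+1,d}$ makes the ``one huge coordinate dominates'' mechanism more explicit and would generalise slightly more readily if one wanted to avoid projections altogether.
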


\begin{proof}
Let $p'_i = (p_{i1}, p_{i2})$ be the projection of point $p_i$ into
the $x_1 x_2$-plane, for $1\le i\le n$. We claim that the points
$p'_i$ lie on an $x_1$-monotone convex curve in the $x_1 x_2$-plane
(which implies the lemma).

To this end, we show that for every three points $p'_i$, $p'_j$,
$p'_k$, with $i < j < k$, the segment $p'_i p'_j$ has a smaller
slope than the segment $p'_j p'_k$; see Figure
\ref{fig_planar_case}(\emph{a}). Indeed, this is the case if and
only if
\begin{equation}\label{eq_slope}
(p_{k2} - p_{j2}) (p_{j1} - p_{i1}) > (p_{j2} - p_{i2}) (p_{k1} -
p_{j1}).
\end{equation}
But (\ref{eq_slope}) will hold as long as the function $f$ in the
definition of $\ll$ is chosen large enough. Specifically, if $f(x)
\ge 4 x^2$, then the left-hand side of (\ref{eq_slope}) is at least
\begin{equation*}
{1\over 2} p_{k2} \cdot {1\over 2} p_{j1} \ge {1 \over 4}p_{k2} \ge
p_{j2}^2,
\end{equation*}
which is larger than the right-hand side of (\ref{eq_slope}).
\end{proof}

Next, we want to show that no point $r = (r_1, \ldots, r_d)\in\R^d$
is contained in more than $(n/(d+1))^{d+1} + O(n^d)$ of the $d$-simplices
spanned by $S$.

We can assume that $p_{1j} \le r_j \le p_{nj}$ for each coordinate
$1\le j\le d$, since otherwise, $r$ is not contained in any
$d$-simplex spanned by $S$. For each coordinate $j=1,\ldots, d$, we
discard from $S$ the last point $p_i$ with $p_{ij} \le r_j$ and the
first point $p_i$ with $p_{ij} \ge r_j$. Let $S'$ be the resulting
set. Since we have discarded at most $2d$ points, the number of
$d$-simplices involving any of the discarded points is only
$O(n^d)$. And now, for every $p_i\in S'$ and every $j$, we have
either $r_j \ll p_{ij}$ or $r_j \gg p_{ij}$.

Let $a=(a_1,\ldots,a_d)\in \R^d$ be a point; we define the
\emph{type} of $a$ with respect to $r$ as $\max\{k:  a_j> r_j \text{
for all } j = 1, 2, \ldots, k \}$. Note that the type of $a$ is an
integer between $0$ and $d$ (it is $0$ if $a_1\le r_1$).

Let $p_{i_0}, \ldots, p_{i_{d}} \in S'$ span a $d$-simplex
containing $r$, with $i_0 < \cdots < i_{d}$. For convenience, we
rename these points and their coordinates as
\begin{equation*}
q_\ell = (q_{\ell 1}, \ldots, q_{\ell d}), \qquad \text{for }
\ell=0,1, \ldots, d.
\end{equation*}

The proof of Theorem~\ref{momentcurvthm} will be almost finished
once we establish the following:

\begin{lemma}\label{l:}
For each $\ell=0,1,\ldots,d$, the  point $q_\ell$ has type $\ell$
with respect to $r$. (See Figure~\ref{fig_planar_case}(b) for an
illustration of the planar case.)
\end{lemma}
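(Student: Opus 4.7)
The plan is to express $r$ as a barycentric combination $r = \sum_{\ell=0}^d \lambda_\ell q_\ell$ with $\lambda_\ell \ge 0$ and $\sum_\ell \lambda_\ell = 1$, and then to use the extreme separation of scales among the $q_{\ell,j}$ to read off the sign of each $r_j - q_{\ell,j}$.

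First, I recast the conclusion in terms of thresholds. Because of the chain $q_{0,j} \ll q_{1,j} \ll \cdots \ll q_{d,j}$ in each coordinate $j$ and the $\ll/\gg$-dichotomy guaranteed by the discarding, there is a unique index $u_j \in \{1,\ldots,d\}$ such that $q_{\ell,j} \ll r_j$ for $\ell < u_j$ and $q_{\ell,j} \gg r_j$ for $\ell \ge u_j$. The two-sided bound $1 \le u_j \le d$ follows immediately from $q_{0,j} \le r_j \le q_{d,j}$ combined with the dichotomy. A short unpacking of the definition of ``type'' shows that the conclusion of the lemma is equivalent to the identity $u_j = j$ for every $j = 1,\ldots,d$.

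Next I establish $u_j = j$ by ruling out both $u_j \ge j+1$ and $u_j \le j-1$. If $u_j \ge j+1$, then $r_j \gg q_{j,j}$; since $\sum_{\ell \le j} \lambda_\ell q_{\ell,j} \le q_{j,j}$, the excess must come from some $\ell^* > j$, forcing $\lambda_{\ell^*} \gg q_{j,j}/q_{\ell^*,j}$. Substituting into the equation for coordinate $\ell^*$ yields
\[
  r_{\ell^*} \;\ge\; \lambda_{\ell^*}\,q_{\ell^*,\ell^*} \;\gg\; \frac{q_{\ell^*,\ell^*}\,q_{j,j}}{q_{\ell^*,j}},
\]
and the rapid growth of $f$ makes this lower bound exceed $f^{-1}(q_{d,\ell^*})$, contradicting the dichotomy bound $r_{\ell^*} \ll q_{d,\ell^*}$. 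Conversely, if $u_j \le j-1$, then $r_j \ll q_{j-1,j}$, and the individual inequalities $\lambda_\ell q_{\ell,j} \le r_j$ force every $\lambda_\ell$ with $\ell \ge j-1$ to be super-tiny; the $\lambda$-mass then concentrates on $\ell \le j-2$, so in coordinate $j-1$ we obtain $r_{j-1} \le 2\,q_{j-2,j-1}$ and hence $r_{j-1} \ll q_{j-2,j-1}$ again by the dichotomy. Iterating this estimate down to coordinate $1$ produces $r_1 \le 2\,q_{0,1}$, in direct contradiction with the established $r_1 \gg q_{0,1}$.

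The main obstacle is the quantitative scale comparison in the first case, namely verifying that $q_{\ell^*,\ell^*}\,q_{j,j}/q_{\ell^*,j}$ really exceeds $f^{-1}(q_{d,\ell^*})$. The key is that traversing the chain $q_{\ell^*,j} \ll q_{\ell^*,j+1} \ll \cdots \ll q_{\ell^*,\ell^*}$ within a single row inflates the value by many iterations of $f$, whereas $f^{-1}$ is applied only once to $q_{d,\ell^*}$; this is precisely what the choice $f(x) = (d+1)!\,x^{d+1}$ in the definition of $\ll$ is designed to guarantee, and the same estimate, run in reverse, is what makes the concentration of $\lambda$-mass in the second case sharp enough to propagate all the way down to coordinate $1$.
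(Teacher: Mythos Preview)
Your barycentric approach is genuinely different from the paper's determinant method and is in principle viable, but Case~1 as written has a real gap.

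You derive a lower bound $r_{\ell^*}\ge \lambda_{\ell^*}q_{\ell^*,\ell^*}$ with $\lambda_{\ell^*}$ of order $r_j/q_{\ell^*,j}$, and then claim this contradicts $r_{\ell^*}\ll q_{d,\ell^*}$. The trouble is that $q_{d,\ell^*}$ and $q_{\ell^*,\ell^*}$ lie in the \emph{same} column $\ell^*$, separated in the total order by $i_d-i_{\ell^*}$ applications of $f$; since $i_d-i_{\ell^*}$ can be as large as $n-1$, the quantity $f^{-1}(q_{d,\ell^*})$ can exceed $q_{\ell^*,\ell^*}$ by an arbitrary number of $f$-iterations. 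Your lower bound, however, is at most $q_{\ell^*,\ell^*}$ (because $q_{j,j}<q_{\ell^*,j}$), so it cannot reach $f^{-1}(q_{d,\ell^*})$ in general. The heuristic ``$f^{-1}$ is applied only once to $q_{d,\ell^*}$'' ignores that $q_{d,\ell^*}$ itself may already be $f^{(\text{many})}(q_{\ell^*,\ell^*})$.

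The fix is exactly the device the paper uses: treat the \emph{largest} $j$ with $u_j\ge j+1$. Then $u_{j'}\le j'$ for all $j'>j$, so in particular $r_{\ell^*}\ll q_{\ell^*,\ell^*}$, and now you only need $\lambda_{\ell^*}q_{\ell^*,\ell^*}\ge f^{-1}(q_{\ell^*,\ell^*})$. A single application of $q_{\ell^*,j}\ll q_{\ell^*,\ell^*}$ (i.e.\ $q_{\ell^*,\ell^*}>(d{+}1)!\,q_{\ell^*,j}^{d+1}$) together with $r_j>1$ gives this immediately. Your Case~2 can be made rigorous along the lines you sketch (the key estimate is that $\sum_{\ell\ge j-1}\lambda_\ell q_{\ell,j-1}\le q_{d,j-1}\cdot r_j/q_{j-1,j}<1$, using $q_{d,j-1}\ll q_{j-1,j}$ and $r_j\ll q_{j-1,j}$), though here too taking the \emph{smallest} offending $j$ streamlines the induction. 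For comparison, the paper avoids barycentric coordinates altogether: it writes the containment condition as $\det M_k>0$ for all $k$, shows each determinant is dominated by a single permutation product, and reads off the sign; the extremal choice of $j$ enters in the same way.
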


Indeed, assuming this lemma, the proof of
Theorem~\ref{momentcurvthm} is concluded as follows. Given $r$, we
partition the points of $S'$ into $d+1$ subsets $S'_0, \ldots, S'_d$
according to their type. Then, for a $d$-simplex spanned by $d+1$
points from $S'$ to contain $r$, each point must come from a
different $S'_k$. The number of such simplices is thus at most
$\prod_{k=0}^d |S'_k| \le (n/(d+1))^{d+1}$, by the
arithmetic-geometric mean inequality.

\begin{proof}[Proof of Lemma \ref{l:}.]
We are going to derive the following relations:
\begin{equation}\label{e:q<<}
q_{(j-1)j} \ll r_j \ll q_{jj}\quad  \text{for every } j=1, 2, \ldots
d.
\end{equation}
Let us first check that they imply the lemma. To see that $q_\ell$
has type $\ell$, we need that $q_{\ell j}>r_j$ for $j\le\ell$ and,
if $\ell<d$, also that $q_{\ell(\ell+1)}\le r_{\ell+1}$. The last
inequality follows from (\ref{e:q<<}) with $j=\ell+1$. To derive
$q_{\ell j}>r_j$, we use that the coordinates of $q_\ell$ are
increasing since $q_\ell\in S$, and thus $q_{\ell j} \ge
q_{jj}>r_j$.

Now we start working on (\ref{e:q<<}). First we express the
condition that $r$ lie in the simplex spanned by $q_0,\ldots,q_d$
using determinants. For each $\ell$, the points $r$ and $q_\ell$
must lie on the \emph{same} side of the hyperplane spanned by the
points $q_m$, $m\neq \ell$. Thus, let $M$ be the $(d+1)\times(d+1)$
matrix consisting of rows $(1,q_0)$, $(1,q_1)$,\ldots, $(1,q_d)$.
For $k = 0,1, \ldots, d$, let $M_k$ be the matrix obtained from $M$
by replacing the row $(1, q_k)$ by $(1, r)$. Then, for each $k$,
$\det M_k$ must have the same sign as $\det M$.

Next, we show that $\det M$ and each $\det M_k$ are ``dominated" by
a single product of entries. Let $A$ be one of the matrices $M, M_0,
M_1,\ldots, M_d$, and denote by $a_{\ell j}$ the entry in row $\ell$
and column $j$ of $A$, for $0\le \ell,j\le d$. We claim that if the
function $f$ in the definition of $\ll$ is chosen sufficiently
large, then there is a single product of the form $\sign(\sigma)
\prod_\ell a_{\ell \sigma(\ell)}$, for some permutation $\sigma$,
which is larger in absolute value than the sum of absolute values of
all the other products in $\det A$.

Indeed, let $a_{\ell_d d}$ be the largest entry in the last column
of $A$. This is also the largest entry in the entire matrix. Then,
if we take $f(x) \ge (d+1)!x^{d+1}$, any permutation product
involving $a_{l_d d}$ is larger than $(d+1)!$ times any permutation
product \emph{not} involving this entry. Thus, we choose $a_{\ell_d
d}$ as the first term in our product, we remove row $\ell_d$ and
column $d$ from $A$, and we continue in this fashion leftwards. We
obtain a product $\prod_\ell a_{\ell \sigma(\ell)}$ which is larger
than $(d+1)!$ times any other permutation product in $\det A$.
Therefore, this product ``dominates" $\det A$ in the above sense,
and so $\sign(\det A) = \sign(\sigma)$.

In particular, these considerations for $A=M$ show that $\det M$ is
dominated by the product
\begin{equation*}
q_{dd} q_{(d-1)(d-1)} \cdots q_{11}\cdot 1
\end{equation*}
corresponding to the identity permutation. Therefore, $\det M
>0$, and so we must have $\det M_k >0$ for all $k$.

Now we are ready to prove (\ref{e:q<<}). First we suppose for
contradiction that $r_j \gg q_{jj}$ for some $j=1,2,\ldots,d$. We
take the largest such $j$; thus, $q_{kk}\gg r_k$ for $k>j$. Then
$\det M_{j-1}$ is dominated by the product
\begin{equation*}
q_{dd}\cdots q_{(j+1)(j+1)} r_j q_{j(j-1)} q_{(j-2)(j-2)}\cdots
q_{11}\cdot 1,
\end{equation*}
so the sign of $\det M_{j-1}$ is the sign of the permutation
associated with this product. This is a permutation with exactly one
inversion, so $\det M_{j-1}<0$, which is a contradiction.

Next, we suppose for contradiction that $r_j \ll q_{(j-1)j}$ for
some $j=1,2,\ldots,d$. Now we take the \emph{smallest} such $j$. We
have already shown that $r_k \ll q_{kk}$ for all $k$. Therefore,
$\det M_j$ is dominated by the product
\begin{equation*}
q_{dd}\cdots q_{(j+1)(j+1)} q_{(j-1)j} r_{j-1} q_{(j-2)(j-2)}\cdots
q_{11}\cdot 1.
\end{equation*}
Again, this product corresponds to a permutation with exactly one
inversion, so we have $\det M_j<0$, which is again a contradiction.
\end{proof}

\section{The construction for Theorem \ref{wagnersharpthm}}

We now present the construction that proves
Theorem~\ref{wagnersharpthm}. 
Let us call a set $Y\subseteq \R^d$ \emph{antisymmetric}
if $Y\cap (-Y)=\emptyset$.

We make use of the following result of
Wendel:

\begin{lemma}[\cite{wendel_sphere}]\label{lemma_wendel}
Let $X = \{x_1, \ldots, x_{d+1} \}$ be a set of $d+1$ points
in general position on the unit sphere $\S^{d-1}$ in $\R^d$. 
Then there are exactly two antisymmetric
$(d+1)$-point subsets of $X\cup (-X)$ 
whose convex hull contains the origin; if one of them
is $Y$, then the other one is $-Y$.
\end{lemma}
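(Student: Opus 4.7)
The plan is to reduce the question to the unique linear dependence among $d+1$ vectors in $\R^d$. Since $X\subset\S^{d-1}$ is in general position, every $d$-subset of $X$ is linearly independent, so the kernel of the linear map $\R^{d+1}\to\R^d$ sending $e_i\mapsto x_i$ is one-dimensional. I would denote a nonzero generator by $(\alpha_1,\ldots,\alpha_{d+1})$, so that $\sum_i\alpha_i x_i=0$, and note that general position forces every $\alpha_i\ne 0$: if some $\alpha_i$ vanished, the remaining $d$ vectors would have to satisfy a nontrivial linear relation.

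Next, I would parametrize the $2^{d+1}$ antisymmetric $(d+1)$-subsets of $X\cup(-X)$ by sign vectors $\epsilon\in\{-1,+1\}^{d+1}$, writing $Y_\epsilon=\{\epsilon_1 x_1,\ldots,\epsilon_{d+1}x_{d+1}\}$ and noting that $Y_{-\epsilon}=-Y_\epsilon$. The origin lies in $\conv(Y_\epsilon)$ precisely when there exist nonnegative $\lambda_1,\ldots,\lambda_{d+1}$, not all zero, with $\sum_i\lambda_i\epsilon_i x_i=0$. The vector $(\lambda_1\epsilon_1,\ldots,\lambda_{d+1}\epsilon_{d+1})$ is then a nonzero element of the one-dimensional kernel identified above, so it must equal $c(\alpha_1,\ldots,\alpha_{d+1})$ for some $c\ne 0$; consequently $\epsilon_i=\sign(c\alpha_i)$ for every $i$.

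This leaves exactly two admissible sign vectors, namely $\epsilon_i=\sign(\alpha_i)$ (from $c>0$) and its negation (from $c<0$), which produce antisymmetric sets that are negatives of each other. Conversely, for either choice the weights $\lambda_i=\lvert c\alpha_i\rvert$ are strictly positive and can be rescaled to sum to $1$, so the origin genuinely lies in $\conv(Y_\epsilon)$; this yields the desired pair $\{Y,-Y\}$. The only delicate point is the use of general position to guarantee every $\alpha_i\ne 0$; once that is in hand the rest is a one-line matching between $\{\pm 1\}^{d+1}$ and the fixed sign pattern $\pm(\sign\alpha_1,\ldots,\sign\alpha_{d+1})$, and I do not anticipate any further obstacle.
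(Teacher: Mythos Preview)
Your argument is correct. The one-dimensional kernel of the linear map $e_i\mapsto x_i$, together with the observation that general position forces every coefficient $\alpha_i$ to be nonzero, pins down exactly the two sign patterns $\pm(\sign\alpha_1,\ldots,\sign\alpha_{d+1})$ for which the origin lies in $\conv Y_\epsilon$. Your parametrization of the antisymmetric $(d{+}1)$-subsets of $X\cup(-X)$ by sign vectors is also justified: since $Y\cap(-Y)=\emptyset$ forbids taking both $x_i$ and $-x_i$, any $(d{+}1)$-element antisymmetric subset must select exactly one point from each antipodal pair.

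As for comparison with the paper: the paper does not supply a proof of this lemma at all. It is quoted as a known result of Wendel and used as a black box in the proof of Theorem~\ref{wagnersharpthm}. Your write-up therefore gives a self-contained elementary proof where the paper simply cites the literature; there is nothing to compare it against, and nothing is missing from your argument.
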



Let $\alpha\in(0, 1/2]$ be a parameter and let $n$ be given. For the
moment assume for simplicity that $\alpha n$ is an integer. Let $A$
be a set of $\alpha n$ points on $\S^{d-1}$ and let $p$ be another
point on $\S^{d-1}$ such that $A \cup \{p\}$ is in general 
position --- namely, such that the set $A\cup(-A)\cup\{p\}$  has
$2|A|+1$ points and no hyperplane containing $p$
and a $(d-1)$-point antisymmetric subset of $A\cup(-A)$
passes through the origin.  Let $P$ be a very small
cluster of $(1-2\alpha) n$ points around $p$.

Our set is $S = A \cup (-A) \cup P$. Note that $|S| = n$ as
required. The origin clearly lies at depth $\alpha n$ with respect
to $S$. Thus, Theorem~\ref{wagnersharpthm} reduces to the following
lemma:

\begin{lemma} The number of $(d+1)$-point
subsets $B$ of $S$ such that $\conv B$ contains the origin
is 
\begin{equation*}
\left((d+1) \alpha^d - 2d\alpha^{d+1}\right){n\choose d+1} + O(n^d).
\end{equation*}
\end{lemma}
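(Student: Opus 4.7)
The plan is to stratify the count by $k:=|B\cap P|$, the number of cluster points in $B$, and show that the main terms come from $k=0$ and $k=1$ while $k\ge 2$ fits into the $O(n^d)$ error. For the analysis I would slightly strengthen the general-position hypothesis so that the origin does not lie in the affine span of any antisymmetric subset of $A\cup(-A)\cup\{p\}$ of size at most $d$; this is a generic refinement of the displayed condition and can be enforced by a tiny perturbation of $A$ and $p$.

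The two main terms come from Wendel's lemma (Lemma~\ref{lemma_wendel}) in two guises. For $k=0$ and antisymmetric $B\subset A\cup(-A)$ of size $d+1$, setting $X:=\{|b|:b\in B\}\in\binom{A}{d+1}$ realizes $B$ as one of the $2^{d+1}$ antisymmetric $(d+1)$-subsets of $X\cup(-X)$; Wendel picks out exactly two per $X$ that contain the origin, giving $2\binom{|A|}{d+1}=\frac{2\alpha^{d+1}}{(d+1)!}n^{d+1}+O(n^d)$ valid $B$'s. For $k=1$, write $B=\{q\}\cup B'$ with $q\in P$ and $|B'|=d$; the strengthened general position keeps the origin off every proper face of $\conv(\{p\}\cup B')$ when $B'$ is antisymmetric, so by continuity $0\in\conv B$ iff $0\in\conv(\{p\}\cup B')$ for a sufficiently small cluster. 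Applying Wendel to $X_0\cup\{p\}$ for each $X_0\in\binom{A}{d}$ produces two antisymmetric subsets $Y,-Y$ with origin in their convex hull, of which exactly one (say $Y$) contains $p$; then $B':=Y\setminus\{p\}$ is the unique antisymmetric $d$-subset of $X_0\cup(-X_0)$ with $0\in\conv(\{p\}\cup B')$. Multiplying $\binom{|A|}{d}$ by $|P|=(1-2\alpha)n$ choices for $q$ yields $(d+1)(1-2\alpha)\alpha^d\binom{n}{d+1}+O(n^d)$.

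What remains is to absorb the non-antisymmetric cases and the $k\ge 2$ case into the error. Any $B$ containing an antipodal pair $\{a,-a\}$ trivially has the origin in its convex hull, but the number of such $B$, across all $k$, is $O(n^d)$: pick the pair in $|A|$ ways and complete it in $O(n^{d-1})$ ways. For $k\ge 2$ and antisymmetric $V:=B\cap(A\cup(-A))$, as the cluster shrinks $\conv B$ collapses onto $\conv(\{p\}\cup V)$, whose dimension is at most $d-1$; the strengthened hypothesis keeps the origin off its affine span, so a sufficiently small cluster forces $0\notin\conv B$. The main subtlety here is that the required cluster radius must be chosen uniformly small over the finitely many combinatorial configurations, which is a routine compactness argument once $A$ and $p$ are fixed. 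Summing the three contributions gives $\bigl[(d+1)(1-2\alpha)\alpha^d+2\alpha^{d+1}\bigr]\binom{n}{d+1}+O(n^d)=\bigl[(d+1)\alpha^d-2d\alpha^{d+1}\bigr]\binom{n}{d+1}+O(n^d)$, as claimed.
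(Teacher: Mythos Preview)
Your proof is correct and follows essentially the same route as the paper: dispose of antipodal pairs as $O(n^d)$, then for antisymmetric $B$ stratify by $k=|B\cap P|$, using Wendel's lemma to get $2\binom{\alpha n}{d+1}$ sets for $k=0$ and $(1-2\alpha)n\binom{\alpha n}{d}$ for $k=1$, and a dimension/continuity argument to show $k\ge 2$ contributes nothing. You spell out the $k\ge 2$ step and the cluster-to-$p$ replacement more carefully than the paper (which simply asserts them), and your mild strengthening of the general-position hypothesis is harmless---indeed it already follows from the paper's stated condition by extending any small antisymmetric subset containing $p$ to one of size~$d$.
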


\begin{proof} The number of $(d+1)$-point subsets
of $S$ that contain a pair of antipodal points (one
in $A$ and one in $-A$) is $O(n^d)$, and so it
suffices to count the number of $B$ that are
antisymmetric.

The choice of $A$ and $P$ guarantees that if $B$ is antisymmetric
and $|B\cap P|\ge 2$, then $0\not\in\conv B$. So we need
to consider the cases $B\cap P=\emptyset$ and $|B\cap P|=1$.

Let us set $\tilde B= \{x\in A\cup P: x\in B \mbox{ or } -x\in B\}$.
For $B\cap P=\emptyset$ there are $\alpha n\choose d+1$ ways of choosing
$\tilde B\subseteq A$, and for each of them we have two choices for $B$
by Lemma~\ref{lemma_wendel}.

For $|B\cap P|=1$, we have 
$(1-2\alpha)n{\alpha n\choose d}$ choices for $\tilde B$,
and each of them yields exactly one $B$ 
(Lemma~\ref{lemma_wendel} with $X=\tilde B$
shows that there are two $B\subset \tilde B\cup(-\tilde B)$ \
with $0\in\conv B$, and exactly one of these contains
the point $p\in P\cap \tilde B$, while the other contains $-p$).

Altogether the number of $B$'s is $2{\alpha n\choose d+1}+
(1-2\alpha)n{\alpha n\choose d}+O(n^d)$, and the lemma follows
by algebraic manipulation.
\end{proof}

If $\alpha n$ is not an integer, then apply the above argument using
$\alpha' = \lceil \alpha n \rceil/n$, and use the fact that $\alpha'
- \alpha < 1/n$.

\section{Partitioning measures by fans of hyperplanes}

In this section we prove Theorem~\ref{thm_part_4d_2_cont}, which is
the main ingredient in the proof of Theorem~\ref{thmsel}. We then
prove Theorem~\ref{thm_part_optimal}, showing that
Theorem~\ref{thm_part_4d_2_cont} is optimal. Recall that an
\emph{$m$-fan} is a set of $m$ hyperplanes in $\R^d$ sharing a
common $(d-2)$-flat.

Let $\S^{d-1}$ denote the unit sphere in $\R^d$, and let
$\Stief_{d,2} = \{ (v,w) \in (\S^{d-1})^2 : v\perp w \}$ denote the
set of ordered pairs of orthonormal vectors in $\R^d$ (called the
\emph{Stiefel manifold} of orthogonal $2$-frames). The proof of
Theorem~\ref{thm_part_4d_2_cont} is based on the following
topological result:

\begin{lemma}\label{lemma_no_func}
There exists no continuous function $g \colon \Stief_{d,2} \to
\S^{2d-4}$ with the following property: For every $(v,w) \in
\Stief_{d,2}$, if $g(v,w) = (a_1, \ldots, a_{2d-3})$ (with $a_1^2 +
\ldots + a_{2d-3}^2 = 1$), then
\begin{itemize}
\item $g(-v,w) = (-a_1, \ldots, -a_{d-1}, a_d, \ldots, a_{2d-3})$, and
\item $g(v,-w) = (a_1, \ldots, a_{d-1}, -a_d, \ldots, -a_{2d-3})$.
\end{itemize}
\end{lemma}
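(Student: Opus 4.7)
The plan is to argue by contradiction using characteristic classes. Suppose such a $g$ exists. Because $v,w\in\S^{d-1}$, every nontrivial element of $\Z_2\times\Z_2$ acting on $\Stief_{d,2}$ by independent sign changes of $v$ and $w$ is fixed-point-free, so the action is free and the orbit space
\begin{equation*}
Y:=\Stief_{d,2}/(\Z_2\times\Z_2)
\end{equation*}
is a closed smooth $(2d-3)$-manifold, concretely the partial flag manifold of ordered pairs $([v],[w])$ of perpendicular lines in $\R^d$. The map $([v],[w])\mapsto[v]$ makes $Y$ a $\Prj^{d-2}$-bundle over $\Prj^{d-1}$, with fiber $\Prj(v^\perp)$ over $[v]$. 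Let $\tilde\xi_1,\tilde\xi_2$ be the tautological real line bundles on $Y$ whose fibers at $([v],[w])$ are the lines $[v]$ and $[w]$, and set $\alpha_i:=w_1(\tilde\xi_i)\in H^1(Y;\Z_2)$.

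Next I would split $g=(f,h)$ with $f=(g_1,\dots,g_{d-1})\colon\Stief_{d,2}\to\R^{d-1}$ and $h=(g_d,\dots,g_{2d-3})\colon\Stief_{d,2}\to\R^{d-2}$. The prescribed equivariance translates into $f$ being odd in $v$ and even in $w$, while $h$ is even in $v$ and odd in $w$; equivalently, $f$ is precisely a section of the bundle $(d-1)\tilde\xi_1$ on $Y$, and $h$ a section of $(d-2)\tilde\xi_2$. Since $g$ takes values on the unit sphere, $(f,h)$ is nowhere zero, so $g$ yields a nowhere-vanishing section of the rank-$(2d-3)$ bundle
\begin{equation*}
E:=(d-1)\tilde\xi_1\oplus(d-2)\tilde\xi_2
\end{equation*}
over the $(2d-3)$-dimensional $Y$. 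A nowhere-vanishing section splits off a trivial line summand, so its existence forces the top Stiefel--Whitney class $w_{2d-3}(E)\in H^{2d-3}(Y;\Z_2)$ to vanish.

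I then compute this class and derive a contradiction. From $w(E)=(1+\alpha_1)^{d-1}(1+\alpha_2)^{d-2}$, the degree-$(2d-3)$ component is $\sum_{j+k=2d-3}\binom{d-1}{j}\binom{d-2}{k}\alpha_1^j\alpha_2^k$; the bounds $j\le d-1$ and $k\le d-2$ force the single surviving term $j=d-1$, $k=d-2$, giving $w_{2d-3}(E)=\alpha_1^{d-1}\alpha_2^{d-2}$. To check this is nonzero, apply the Leray--Hirsch theorem to the fibration $\Prj^{d-2}\hookrightarrow Y\to\Prj^{d-1}$: since $\alpha_2$ restricts to the degree-one generator on each fiber $\Prj^{d-2}$, the cohomology $H^*(Y;\Z_2)$ is a free module over $H^*(\Prj^{d-1};\Z_2)=\Z_2[\alpha_1]/(\alpha_1^d)$ on the basis $\{1,\alpha_2,\dots,\alpha_2^{d-2}\}$. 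Consequently $\alpha_1^{d-1}\alpha_2^{d-2}$ is a basis element and in particular generates $H^{2d-3}(Y;\Z_2)\cong\Z_2$, contradicting $w_{2d-3}(E)=0$.

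The main technical step is the nonvanishing of $\alpha_1^{d-1}\alpha_2^{d-2}$, which hinges on identifying the projective bundle structure $Y\to\Prj^{d-1}$ and invoking Leray--Hirsch; the remaining pieces (freeness of the action, bundle identification of $f$ and $h$, the direct-sum Stiefel--Whitney formula, and the splitting-off obstruction argument) are routine.
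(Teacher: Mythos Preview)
Your proof is correct. It differs from the paper's in presentation and in the tools invoked, though the two arguments are cohomological cousins.

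The paper frames the problem abstractly: it states that the map in question would be a $(\Z_2\times\Z_2)$-equivariant map $\Stief_{d,2}\to\S^{2d-4}$, and then appeals to the Fadell--Husseini ideal-valued index. Concretely, it quotes that the $G$-index of $\S^{2d-4}$ (with the sign-flip action) is the principal ideal $(t_1^{d-1}t_2^{d-2})\subset\Z_2[t_1,t_2]$, and that by a result of Fadell the $G$-index of $\Stief_{d,2}$ does \emph{not} contain $t_1^{d-1}t_2^{d-2}$; since an equivariant map would force the reverse inclusion, none exists. No computation is carried out in the paper itself.

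You instead pass to the free quotient $Y=\Stief_{d,2}/(\Z_2\times\Z_2)$, recognize the hypothetical $g$ as a nowhere-zero section of $E=(d-1)\tilde\xi_1\oplus(d-2)\tilde\xi_2$, and compute the obstruction $w_{2d-3}(E)=\alpha_1^{d-1}\alpha_2^{d-2}$ directly, checking its nonvanishing via Leray--Hirsch on the projective bundle $Y\to\Prj^{d-1}$. This is exactly the content of Fadell's cited result once one unwinds the definitions: for a free action, $H^*_G(X)\cong H^*(X/G)$, the generators $t_i\in H^*_G(\mathrm{pt})$ map to your $\alpha_i$, and the statement ``$t_1^{d-1}t_2^{d-2}\notin{\rm Ind}_G(\Stief_{d,2})$'' is precisely ``$\alpha_1^{d-1}\alpha_2^{d-2}\neq0$ in $H^*(Y;\Z_2)$''. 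So you have effectively reproved the ingredient the paper imports, using only Stiefel--Whitney classes and Leray--Hirsch rather than the general index machinery. Your argument is more self-contained; the paper's is shorter but relies on black-boxed citations.
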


\begin{proof}
The lemma is a result on nonexistence of an \emph{equivariant map}.
Let us briefly recall the basic setting; for more background we
refer to \cite{Zivaljevic-topmeth}, \cite{Mat-top}.

Let $G$ be a finite group. A \emph{$G$-space} is a topological space
$X$ together with an \emph{action}  of $G$ on $X$, which is a
collection $(\varphi_g)_{g\in G}$ of homeomorphisms $\varphi_g\:X\to
X$ whose composition agrees with the group operation in $G$; that
is, $\varphi_e={\rm id}_X$ for the unit element $e\in G$ and
$\varphi_g\circ\varphi_h=\varphi_{gh}$ for all $g,h\in G$.

In our case, the relevant group is $G:=\Z_2\times \Z_2$ (the direct
product of two cyclic groups of order $2$). We can write $G=\{e,
g_1, g_2, g_1g_2\}$, where $g_1$ and $g_2$ are two generators of
$G$; in order to specify an action of $G$, it is enough to give the
homeomorphisms corresponding to $g_1$ and $g_2$. The lemma deals
with two $G$-spaces:
\begin{itemize}
\item
The Stiefel manifold $\Stief_{d,2}$ with the action
$(\varphi_g)_{g\in G}$ of $G$ given by $\varphi_{g_1}(v,w)=(-v,w)$,
$\varphi_{g_2}(v,w)=(v,-w)$.

\item
The sphere $\S^{2d-4}$ with the action $(\psi_g)_{g\in G}$, where
$\psi_{g_1}$ flips the signs of the first $d-1$ coordinates and
$\psi_{g_2}$ flips the signs of the remaining $d-2$ coordinates.
\end{itemize}

We want to prove that there is no \emph{equivariant map} $f\:
\Stief_{d,2}\to \S^{2d-4}$, where an equivariant map is a continuous
map that commutes with the actions of $G$, i.e., such that $f\circ
\varphi_g=\psi_g\circ f$ for all $g\in G$. The ``usual'' elementary
methods for showing nonexistence of equivariant maps, explained in
\cite{Zivaljevic-topmeth}, \cite{Mat-top} and based on the
Borsuk--Ulam theorem and its generalizations, cannot be applied
here. We use the \emph{ideal-valued cohomological index} of Fadell
and Husseini \cite{FadellHusseini1} (also see
\cite{Zivaljevic-guide2}).

This method assigns to every $G$-space $X$ the \emph{$G$-index} of
$X$, denoted by ${\rm Ind}_G(X)$, which is an ideal in a certain
ring $R_G$ (depending only on $G$). A key property is that whenever
there is an equivariant map $f\:X\to Y$, where $X$ and $Y$ are
$G$-spaces, we have ${\rm Ind}_G(Y)\subseteq {\rm Ind}_G(X)$. For
the considered $G=\Z_2\times\Z_2$, $R_G$ is the ring $\Z_2[t_1,t_2]$
of polynomials in two variables with $\Z_2$ coefficients. The
general definition of ${\rm Ind}_G(X)$, as well as its computation,
are rather complicated, but fortunately, in our case we can use
ready-made results from the literature.

For the $G$-space $\S^{2d-4}$ with the $G$-action as above, the
$G$-index  is the principal ideal in $\Z_2[t_1,t_2]$ generated by
$t_1^{d-1}t_2^{d-2}$ according to Corollary~2.12 in
\cite{Zivaljevic-guide2}. On the other hand, Fadell \cite{Fadell}
proved that the $G$-index of the $G$-space $\Stief_{d,2}$ with the
described $G$-action does not contain the monomial
$t_1^{d-1}t_2^{d-2}$ (also see \cite{Inoue} for a statement of this
result and some applications of it). This shows that an equivariant
map as in the lemma is indeed impossible.
\end{proof}

\begin{proof}[Proof of Theorem~\ref{thm_part_4d_2_cont}.]
We follow the ``configuration space/test map'' paradigm (see, e.g.,
\cite{Zivaljevic-topmeth}). We encode each ``candidate'' for the
desired equipartition, which in our case is going to be a certain
special fan of $4d-2$ half-hyperplanes sharing the boundary
$(d-2)$-flat, by a point of $\Stief_{d,2}$. Then we define a
continuous \emph{test map} that assigns to each candidate fan of
half-hyperplanes a $(2d-3)$-tuple of real numbers, which measures
how far the given candidate is from being a $(2d-1)$-fan of
hyperplanes. Finally we will check that if there were no
equipartition, the test map would yield an equivariant map
$\Stief_{d,2} \to \S^{2d-4}$, which would contradict
Lemma~\ref{lemma_no_func}. The details follow.

For the proof we may assume that every nonempty open set has a
positive $\mu$-measure. (Given an arbitrary $\mu$, we can consider
the convolution $\mu*\gamma_\varepsilon$ of $\mu$ with a suitable
probability measure $\gamma_\varepsilon$ whose density function is
everywhere nonzero but  for which all but at most $\varepsilon$ of
the mass lies in a ball of radius $\varepsilon$ around $0$. The
convolution has the required property and then, given an
equipartition for each $\mu*\gamma_\varepsilon$ a limit argument,
letting $\varepsilon\to0$, yields an equipartition for the original
$\mu$. See the proof of \cite[Theorem 3.1.1]{Mat-top} for a 
similar limit argument.)

Let $m = 2d-1$. Suppose we are given two orthonormal vectors $v, w
\in \S^{d-1}$. Let $h$ be the unique hyperplane orthogonal to $v$
that splits $\R^d$ into two halfspaces of equal measure with respect
to $\mu$. We say that the halfspace in the direction of $v$ is
``above" $h$, and the other halfspace is ``below" $h$.

Let $\ell$ be a $(d-2)$-flat orthogonal to $w$ contained in $h$.
Note that $\ell$ splits $h$ into two half-hyperplanes.
We say that the half of
$h$ in the direction of $w$ lies ``left" of $\ell$, and the other
half of $h$ lies ``right" of $\ell$.

Every half-hyperplane with boundary $\ell$ is uniquely determined by
the angle it makes with the left half of $h$.
\begin{figure}
\centerline{\includegraphics{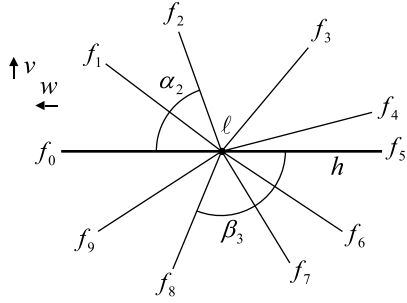}}
\caption{\label{fig_half_hyperplanes}$2m$ half-hyperplanes coming
out of $\ell$ that partition the measure $\mu$ into $2m$ equal
parts. (Here $m=5$.)}
\end{figure}
Let $f_0, f_1, \ldots, f_{2m-1}$ be $2m$ half-hyperplanes coming out
of $\ell$, listed in circular order, that split the measure $\mu$
into $2m$ equal parts, as follows:
\begin{itemize}
\item
$f_0$ is the left
half of $h$;

\item
$f_1, \ldots, f_{m-1}$ lie above $h$;

\item
$f_m$ is the right half of $h$; and

\item
$f_{m+1}, \ldots, f_{2m-1}$ lie below $h$.
\end{itemize}
See Figure~\ref{fig_half_hyperplanes}.

For $i = 1, \ldots, m-1$, let $\alpha_i$ be the angle between $f_0$
and $f_i$, and let $\beta_i$ be the angle between $f_m$ and
$f_{m+i}$. Let $\gamma_i = \alpha_i - \beta_i$. Note that $\gamma_i
= 0$ means that $f_i$ and $f_{m+i}$ are aligned into a hyperplane.

Translating $\ell$ within $h$ to the left causes the $\alpha_i$'s to
increase and the $\beta_i$'s to decrease, while translating it to
the right has the opposite effect. Therefore, there exists a unique
position of $\ell$ for which $\sum \alpha_i = \sum \beta_i$, or
equivalently, $\sum \gamma_i = 0$, and we fix $\ell$ there. In this
way, we have defined each $\alpha_i$ and $\beta_i$ as a function of
the given vectors $v$, $w$. Using the assumption that $\mu$ is
absolutely continuous with respect to the Lebesgue measure and each
open set has a positive $\mu$-measure, it is routine to verify the
continuity of the $\alpha_i$ and $\beta_i$ as functions of $v$ and
$w$.

\begin{figure}
\centerline{\includegraphics{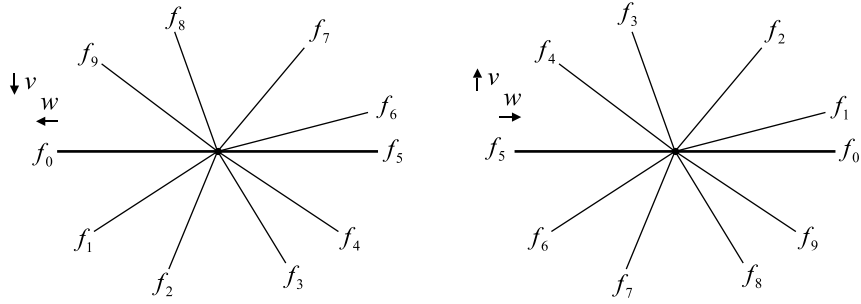}}
\caption{\label{fig_flip_v_w} The effect of changing the
sign of $v$ (left) or
$w$ (right).}
\end{figure}

Let us examine what happens when we change the sign of $v$ or $w$.
We have:
\begin{align*}
\alpha_i(-v,w) &= \pi - \beta_{m-i}(v,w),&
\beta_i(-v,w) &= \pi - \alpha_{m-i}(v,w),\\
\alpha_i(v,-w) &= \pi - \alpha_{m-i}(v,w),& \beta_i(v,-w) &= \pi
-\beta_{m-i}(v,w).
\end{align*}
See Figure \ref{fig_flip_v_w}. Therefore,
\begin{align*}
\gamma_i(-v,w) &= \gamma_{m-i}(v,w),\\
\gamma_i(v,-w) &= -\gamma_{m-i}(v,w).
\end{align*}
Now we introduce a suitable change of coordinates in the target
space so that the resulting map behaves as the map $g$ considered in
Lemma~\ref{lemma_no_func}. Namely, we set
\begin{align*}
\lambda_i &= \gamma_i - \gamma_{m-i}, \qquad \mathrm{for\ } i = 1,
\ldots, (m-1)/2;\\
\mu_i &= \gamma_i + \gamma_{m-i}, \qquad \mathrm{for\ } i = 2,
\ldots, (m-1)/2.
\end{align*}
Note that
\begin{align*}
\lambda_i(-v,w) &= -\lambda_i(v,w), &
\mu_i(-v,w) &= \mu_i(v,w),\\
\lambda_i(v,-w) &= \lambda_i(v,w),& \mu_i(v,-w) &= -\mu_i(v,w).
\end{align*}

We have $\gamma_i = 0$ for all $i$ if and only if $\lambda_i, \mu_i
= 0$ for all $i$. (Recall that $\sum \gamma_i = 0$.) Now we define
the ``test map'' $G \colon \Stief_{d,2} \to \R^{m-2}$ by
\begin{equation*}
G(v,w) = (\lambda_1, \ldots \lambda_{(m-1)/2}, \mu_2, \ldots,
\mu_{(m-1)/2}).
\end{equation*}
Then, our desired equipartition of $\mu$ exists if and only if
$G(v,w) = (0,\ldots,0)$ for some $(v,w)$.

But $G$ is a continuous map such that flipping $v$ flips the first
$(m-1)/2 = d-1$ coordinates of the image, while flipping $w$ flips
the last $(m-3)/2 = d-2$ coordinates of the image. If we had  $G
\neq (0,\ldots, 0)$ for all $(v,w)$, the map $g \colon \Stief_{d,2}
\to \S^{2d-4}$ given by $g(v,w) = G(v,w) / \| G(v,w) \|$ would
contradict Lemma~\ref{lemma_no_func}. Therefore, the desired
equipartition exists.
\end{proof}

We conclude this section by proving Theorem~\ref{thm_part_optimal},
which shows that Theorem~\ref{thm_part_4d_2_cont} is best possible,
in the sense that an equipartition of a measure $\mu$ in $\R^d$ by a
fan of $2d$ or more hyperplanes does not necessarily exist. The
proof is based on the following lemma:

\begin{lemma}\label{lemma_exists_gral_pos}
Let $m>0$ be an integer and let $t\ge 2d+m-1$. Then there exists a
$t$-point set $T \subset \R^d$ that cannot be covered by any
$m$-fan in $\R^d$.
\end{lemma}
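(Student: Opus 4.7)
The plan is a dimension-counting / genericity argument. I will show that the configurations $(p_1,\ldots,p_t)\in(\R^d)^t$ that \emph{can} be covered by some $m$-fan form a subset of Lebesgue measure zero as soon as $t\ge 2d+m-1$, so a generic $t$-tuple yields the desired set~$T$.

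First I would count parameters. A $(d-2)$-flat in $\R^d$ has $2d-2$ parameters (the Grassmannian $\mathrm{Gr}(d-2,d)$ is of dimension $2(d-2)$, plus $2$ translational degrees in the perpendicular plane), and given an axis $\ell$ the hyperplanes through $\ell$ form a one-parameter family. So the space of $m$-fans has dimension $2d-2+m$.

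Next, for each function $\sigma\colon[t]\to[m]$ indicating which hyperplane of the fan is supposed to contain $p_j$, define the incidence variety
\[
V_\sigma=\bigl\{(\mathrm{fan},p_1,\ldots,p_t):p_j\in H_{\sigma(j)}\text{ for all }j\bigr\}.
\]
Once the fan is chosen ($2d-2+m$ parameters), each $p_j$ is confined to a prescribed hyperplane, contributing $d-1$ further parameters. Hence $\dim V_\sigma\le (2d-2+m)+t(d-1)=td-(t-2d-m+2)$. Projecting $V_\sigma$ onto the configuration space $(\R^d)^t$ gives the bad set $B_\sigma$ of configurations coverable via the assignment~$\sigma$. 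Since the image of a semi-algebraic set under a polynomial projection has no larger dimension, $\dim B_\sigma\le\dim V_\sigma\le td-1$ whenever $t\ge 2d+m-1$.

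Finally, there are only $m^t$ assignments, so $\bigcup_\sigma B_\sigma$ is a finite union of semi-algebraic sets, still of dimension $\le td-1$, hence of Lebesgue measure zero in $(\R^d)^t$. Any configuration in the (nonempty, full-measure) complement furnishes the required $T$. The only ingredient that goes beyond elementary parameter-counting is the standard fact that polynomial projections do not raise dimension; I do not expect any serious obstacle, as the entire argument reduces to the observation that the ``expected dimension'' of the space of $m$-fans covering $t$ prescribed points becomes negative exactly at the threshold $t=2d+m-1$.
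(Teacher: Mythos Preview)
Your argument is correct and follows essentially the same dimension-counting strategy as the paper's proof: form the incidence variety of pairs (fan, configuration), observe that its dimension is at most $(2d+m-2)+t(d-1)$, and invoke Tarski--Seidenberg to conclude that its projection to the configuration space is a proper semialgebraic subset once $t\ge 2d+m-1$. The paper differs only cosmetically, working in $\Prj^d$ and parametrizing $m$-fans via projective duality (as $m$-tuples of dual points of rank at most~$2$) rather than via the axis and the pencil of hyperplanes, and handling the union $\bigcup_i H_i$ directly instead of decomposing over assignments~$\sigma$.
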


The basic idea, roughly speaking, is that an $m$-fan in $\R^d$ has
$2d+m-2$ degrees of freedom, while each point in $T$ takes away one
degree of freedom. Therefore, $T$ can be completely covered by an
$m$-fan only if it is degenerate an appropriate sense.

\begin{proof}[Proof of Lemma~\ref{lemma_exists_gral_pos}]
For convenience we first prove the result in $\Prj^d$, the
$d$-dimensional projective space, and then we show that the result
also applies to $\R^d$.

A set of $m$ hyperplanes in $\Prj^d$ share a common $(d-2)$-flat if
and only if their dual points, when considered as vectors in
$\R^{d+1}$, span a vector space of dimension at most $2$. Thus,
define the projective variety
\begin{equation*}
V = \bigl\{ (p_1,\dotsc,p_m)\in (\Prj^d)^m :
\rank(p_1,\dotsc,p_m)\le 2 \bigr\},
\end{equation*}
where $\rank(p_1,\dotsc,p_m)$ denotes the dimension of the vector
space spanned by $p_1,\dotsc,p_m$ as vectors in $\R^{d+1}$. The
variety $V$ has dimension $\dim V=2d+m-2$.

Given a point $p=p_0 : p_1 : \dotsb : p_d\in\Prj^d$, let
$p^*=\{x_0 : \dotsb : x_d\in \Prj^d : \sum_i x_i p_i=0\}$
denote the hyperplane dual to $p$. For each $v=(p_1,\dotsc,p_m)\in
V$, let $v^*=\bigcup_i p_i^* \subset \Prj^d$ be the
variety which consists of the union of the hyperplanes dual to the
points in $v$. Finally, define the moduli space
\begin{equation*}
C = \bigl\{ (v, q_1,\dotsc,q_t)\in V\times (\Prj^d)^t :
q_1,\dotsc,q_t\in v^* \bigr\}
\end{equation*}
of all $t$-tuples of points lying on a fan $v^*$, for all $v\in
V$. The dimension of $C$ is $\dim C = \dim V+t(d-1) = 2d+m-2 +
t(d-1)$.

Consider the projection map $\pi\colon V \times (\Prj^d)^t \to
(\Prj^d)^t$. Then the projection $\pi(C)$ is the set of $t$-tuples
of points in $\Prj^d$ that can be covered by an $m$-fan. 
By the Tarski--Seidenberg Theorem \cite{real_algref} $\pi(C)$ is a semialgebraic
subset of $(\Prj^d)^t$. Since projection does not increase
dimension, $\pi(C)$ is of dimension at most $2d+m-2+t(d-1)$, which
by our choice of $t$ is smaller than $td = \dim (\Prj^d)^t$.

Thus, there exists a $t$-point set $T$ in $\Prj^d$ that cannot be
covered by an $m$-fan. Finally, a generic $\R^d$ inside $\Prj^d$
completely contains $T$, and the lemma follows.
\end{proof}

\begin{proof}[Proof of Theorem~\ref{thm_part_optimal}]
Given an integer $m\ge 2d$, let $t = 2m - 1$.  We have $t\ge
2d+m-1$, so by the preceding lemma there exists a $t$-point set
$T\subset \R^d$ that cannot be covered by any $m$-fan in $\R^d$.

There must exist a positive radius $r$ such that, for every $m$-fan
$F$ in $\R^d$, some point of $T$ lies at distance at least $r$ from
the closest hyperplane in $F$. (Otherwise a limit argument would
yield an $m$-fan that covers $T$.)

Let $C_r(p)$ denote the ball of radius $r$ centered at $p$. Let
$\mu$ be the uniform measure on $\bigcup_{p\in T} C_r(p)$. Then, in
every partition of $\R^d$ into $2m$ parts by an $m$-fan, there
exists a part that completely contains one of the balls $C_r(p)$.
This part has measure at least $\frac{1}{t}>1/(2m)$, and so the
partition is not an equipartition.
\end{proof}

\section{Stabbing many triangles in $\R^d$}

In this section we prove Theorem~\ref{thmsel} by means of
Theorem~\ref{thm_part_4d_2_cont}. The proof is an extension of the
technique in \cite{bukh} for the case $d=2$.

By a standard approach (see e.g.~Theorem~3.1.2 in \cite{Mat-top}),
Theorem~\ref{thm_part_4d_2_cont} implies the following discrete
version, which is what we actually use:

\begin{corollary}\label{c_part_4d_2}
Let $S$ be a set of $n$ points in $\R^d$. Then there exist $2d-1$
hyperplanes passing through a common $(d-2)$-flat that divide the
space into $4d-2$ parts, each containing at most $n/(4d-2) + O(1)$
points of $S$.
\end{corollary}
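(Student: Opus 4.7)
The plan is to reduce the discrete statement to the continuous equipartition theorem by smearing each point of $S$ into a tiny ball of uniform mass, applying Theorem~\ref{thm_part_4d_2_cont}, and then passing to a limit as the smearing radius tends to zero. Throughout, I would assume $S$ is in general position (no $d+1$ points on a common hyperplane), which is essentially harmless since a small generic perturbation of the points can only affect the count in each part by $O(1)$.

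Concretely, for each small $\varepsilon > 0$ I would let $\mu_\varepsilon$ be the probability measure that is uniform on the union of the pairwise disjoint balls $B_\varepsilon(p_1),\ldots,B_\varepsilon(p_n)$, with total mass $1/n$ on each ball. Since $\mu_\varepsilon$ is absolutely continuous, Theorem~\ref{thm_part_4d_2_cont} produces a $(2d-1)$-fan $F_\varepsilon$ that partitions $\mu_\varepsilon$ into $4d-2$ parts of mass exactly $1/(4d-2)$. I would then take a sequence $\varepsilon_k \to 0$ and, by a compactness argument (the orientation parameter $(v,w)$ of the fan lives in the compact Stiefel manifold $\Stief_{d,2}$, and the axis can be confined to a bounded neighborhood of $\conv(S)$), pass to a subsequence along which $F_{\varepsilon_k}$ converges to a limit fan $F$.

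The counting step is then straightforward: for each point $p_i$ of $S$ lying strictly inside an open part $P$ of $F$, the ball $B_{\varepsilon_k}(p_i)$ is entirely contained in the corresponding part of $F_{\varepsilon_k}$ for all large $k$, and thus contributes its full mass $1/n$ there. Since each part has $\mu_{\varepsilon_k}$-mass $1/(4d-2)$, at most $n/(4d-2)$ points of $S$ can lie strictly inside a single part of $F$. The remaining points---those sitting on some hyperplane of $F$---can be assigned arbitrarily to an adjacent part; general position bounds their total number by at most $(2d-1)d = O(1)$, completing the argument.

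The main obstacle I expect is making the compactness step fully rigorous: the axis of $F_{\varepsilon_k}$ is a $(d-2)$-flat which could in principle drift to infinity, producing in the limit a degenerate ``fan of parallel hyperplanes.'' Ruling this out requires either showing that any equipartition fan for a measure supported near $S$ must keep its axis bounded in terms of $\conv(S)$, or else compactifying the space of fans so that even such limits still define a partition of $\R^d$ into $4d-2$ regions to which the counting argument applies.
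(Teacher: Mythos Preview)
Your proposal is correct and is exactly the standard passage from continuous to discrete equipartitions that the paper invokes without spelling out (it merely cites Theorem~3.1.2 in \cite{Mat-top}): replace each point by a tiny uniform ball, apply Theorem~\ref{thm_part_4d_2_cont}, and extract a limit fan by compactness. The compactness obstacle you flag is the only genuine technicality, and your suggested resolution---showing the axis of an equipartition fan cannot drift far from $\conv(S)$, since otherwise too few wedges meet the support of $\mu_\varepsilon$---is the standard way to handle it.
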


We start with the following lemma:

\begin{lemma}\label{lemma_2m_parts_plane}
Let $\ell_0, \ldots, \ell_{m-1}$ be $m$ lines in the plane passing
through a common point $x$, dividing the plane into $2m$ sectors.
Let $P = \{p_0, \ldots, p_{2m-1}\}$ be $2m$ points, one from each
sector, listed in circular order around $x$. Then, out of the
$2m\choose 3$ triangles defined by $P$, at least $(m+1)m(m-1)/3$
contain $x$. (This minimum is achieved if $P\cup \{x\}$ is in
general position, in particular if no two points of $P$ are
collinear with $x$.)
\end{lemma}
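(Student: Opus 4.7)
The plan is to place $x$ at the origin and count the complement: triangles that do \emph{not} contain $x$. Under general position (no two points of $P$ are collinear with $x$), a triple $\{p_i,p_j,p_k\}$ fails to contain $x$ in its convex hull if and only if the three angles $\theta_i,\theta_j,\theta_k$ of the points as seen from $x$ lie in a common open half-circle. I will first establish the exact count in the generic case and then handle degenerate configurations by a semi-continuity argument.

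For each $i$, set $c_i$ to be the number of indices $j\neq i$ with $\theta_j-\theta_i\in(0,\pi)\pmod{2\pi}$, i.e.\ the number of other points in the open counterclockwise semicircle from $p_i$. I will argue that $\sum_i\binom{c_i}{2}$ equals the number of bad triples: among the three cyclic arcs cut out by any bad triple, exactly one exceeds $\pi$, and the triple is naturally charged to the endpoint of that long arc lying at its counterclockwise end. In general position this charging is bijective, so the number of bad triples is $\sum_i\binom{c_i}{2}$.

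It then remains to evaluate the $c_i$ using the sector hypothesis. Because $p_s$ lies in the open sector $(s\pi/m,(s+1)\pi/m)$, every $p_j$ with $j-s\bmod 2m\in\{1,\ldots,m-1\}$ automatically satisfies $\theta_j-\theta_s\in(0,\pi)$, contributing $m-1$ points unconditionally. The only ambiguous index is $j=s+m$, for which $\theta_{s+m}-\theta_s$ falls in the narrow band $(\pi-\pi/m,\pi+\pi/m)$; for each antipodal pair $\{s,s+m\}$ exactly one of the two directional differences is less than $\pi\pmod{2\pi}$, so exactly one of $c_s,c_{s+m}$ equals $m$ while the other equals $m-1$. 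Hence $m$ of the $c_i$ are $m$ and the remaining $m$ are $m-1$, which yields
\begin{equation*}
\sum_i\binom{c_i}{2}=m\binom{m}{2}+m\binom{m-1}{2}=m(m-1)^2,
\end{equation*}
and subtracting from $\binom{2m}{3}$ gives exactly $(m-1)m(m+1)/3$ good triples, matching the claimed minimum. The lower bound in the non-generic case will follow by perturbing each $p_i$ slightly within its sector to restore general position: a triangle whose interior contains $x$ continues to do so under sufficiently small perturbations, so the count of triangles containing $x$ is upper semi-continuous in the point positions, and the generic value bounds it from below. The step requiring the most care is the bijectivity of the charging in the second paragraph---this is exactly where the hypothesis that no two points of $P$ are collinear with $x$ is used essentially.
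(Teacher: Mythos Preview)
Your argument is correct and takes a genuinely different route from the paper. The paper classifies triangles by the cyclic index-differences of their sides (``short'' if $<m$, ``medium'' if $=m$, ``long'' if $>m$), observes that all-short triangles always contain $x$, one-long triangles never do, and one-medium triangles pair up so that at least one of each pair does; it then counts directly. You instead count the complement using the standard charging of a bad triple to the counterclockwise endpoint of its unique arc of length $>\pi$, obtaining $\sum_i\binom{c_i}{2}$ bad triples, and then exploit the sector structure to pin down the multiset $\{c_i\}$. Both approaches are short; yours plugs into a well-known formula, while the paper's is more self-contained and makes the ``medium'' (antipodal-sector) pairing explicit, which is exactly what your observation ``for each antipodal pair $\{s,s+m\}$ exactly one of $c_s,c_{s+m}$ equals $m$'' is doing in disguise.

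Two small points to tighten. First, writing the $s$-th sector as $(s\pi/m,(s+1)\pi/m)$ presumes equally spaced lines, which the lemma does not assume; your conclusions about the $c_i$ hold for arbitrary lines because sector $s+m$ is exactly the antipode of sector $s$ (they are bounded by the same two lines), so sectors $s+1,\ldots,s+m-1$ lie strictly inside the open semicircle from $p_s$ regardless of spacing --- just say that instead. Second, in the perturbation step you justify upper semicontinuity of the count by noting that interior containment is stable; that actually gives \emph{lower} semicontinuity of the interior count. What you need (and what is true) is that the complement is stable: if $x\notin\conv\{p_i,p_j,p_k\}$ then this persists under small perturbations, so the number of triangles \emph{not} containing $x$ can only increase under perturbation to general position, forcing the original count to be at least the generic value.
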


\begin{proof}
Let $p_i p_j$ be a directed segment joining two points of $P$, and
let $d = (j-i) \bmod 2m$. If $0\le d \le m-1$, we call the segment
$p_i p_j$ \emph{short}; if $d = m$, we call it \emph{medium}; and if
$m+1 \le d \le 2m-1$, we call it \emph{long}.

A triangle $p_i p_j p_k$, with $i<j<k$, can have either three short
sides, or two short sides and one medium side, or two short sides
and one long side.

\begin{figure}
\centerline{\includegraphics{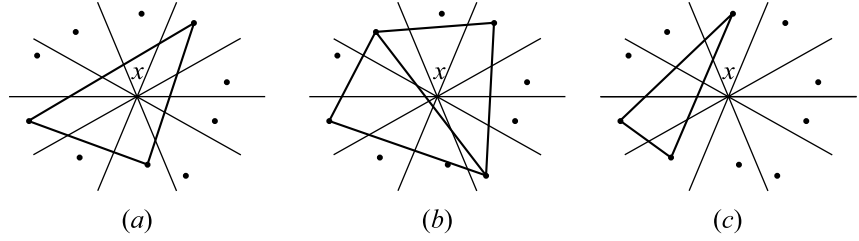}}
\caption{\label{fig_triangle_types}(\emph{a}) A triangle with three
short sides always contains $x$. (\emph{b}) The triangles with one
medium side can be partitioned into pairs, such that at least one
triangle from each pair contains $x$. (\emph{c}) A triangle with one
long side never contains $x$.}
\end{figure}

It is easy to see that all the triangles with three short sides
contain $x$, and none of the triangles with one long side contain
$x$. Furthermore, the triangles with one medium side can be grouped
into pairs, such that from each pair, at least one triangle contains
$x$ (exactly one triangle if $P\cup \{x\}$ is in general position).
See Figure~\ref{fig_triangle_types}.

The number of triangles with three short sides is
\begin{equation*}
{2m\over 3}\bigl(1 + 2 + \cdots + (m-2) \bigr) = {m(m-1)(m-2) \over
3};
\end{equation*}
and the number of triangles with one medium side is $2m(m-1)$. Thus,
$P$ defines at least
\begin{equation*}
{m(m-1)(m-2) \over 3} + m(m-1) = {(m+1)m(m-1)\over 3}
\end{equation*}
triangles that contain $x$ (exactly these many if $P\cup \{x\}$ is
in general position).
\end{proof}

\begin{corollary}\label{cor_2m_parts_d}
Let $h_0, \ldots, h_{m-1}$ be $m$ hyperplanes in $\R^d$ that pass
through a common $(d-2)$-flat $\ell$ and divide space into $2m$
parts. Let $P = \{ p_0, \ldots, p_{2m-1} \}$ be $2m$ points, one
from each part. Then $P$ defines at least $(m+1)m(m-1)/3$ triangles
that intersect $\ell$.
\end{corollary}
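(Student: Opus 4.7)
My plan is to reduce Corollary~\ref{cor_2m_parts_d} directly to the planar statement Lemma~\ref{lemma_2m_parts_plane} by projecting along~$\ell$.

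First I would fix a $2$-dimensional linear subspace $\Pi$ complementary (for concreteness, orthogonal) to the direction of $\ell$, and let $\pi\colon\R^d\to\Pi$ be the orthogonal projection along $\ell$. Since $\ell$ is a $(d-2)$-flat and each $h_i$ is a hyperplane containing $\ell$, the image $\pi(\ell)$ is a single point $x\in\Pi$, each image $\pi(h_i)$ is a line through $x$, and the $m$ lines $\pi(h_0),\ldots,\pi(h_{m-1})$ divide $\Pi$ into $2m$ open sectors around $x$. Moreover, each of the $2m$ open regions of $\R^d$ cut out by the hyperplanes $h_0,\ldots,h_{m-1}$ is a product of one of the planar sectors with $\ell$, so it projects to exactly one sector of $\Pi$, and distinct regions project to distinct sectors. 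Hence the points $\pi(p_0),\ldots,\pi(p_{2m-1})$ lie one per sector; after relabeling, we may assume they are listed in circular order around $x$.

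The key geometric observation is that for any three points $p_i,p_j,p_k\in P$, the triangle $\conv\{p_i,p_j,p_k\}$ meets $\ell$ if and only if the projected triangle $\conv\{\pi(p_i),\pi(p_j),\pi(p_k)\}$ contains the point $x$. This is immediate from $\pi^{-1}(x)=\ell$: if the triangle meets $\ell$ at some point $q$ then $\pi(q)=x$ lies in the projected triangle, and conversely any preimage under $\pi$ of $x$ inside the projected triangle lies simultaneously in $\ell$ and in $\conv\{p_i,p_j,p_k\}$ (using that $\pi$ is affine and hence commutes with convex hulls).

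Applying Lemma~\ref{lemma_2m_parts_plane} to the configuration $\pi(P)$ with the $m$ lines $\pi(h_0),\ldots,\pi(h_{m-1})$ through $x$ yields at least $(m+1)m(m-1)/3$ planar triangles containing $x$, which by the previous paragraph correspond to at least that many triangles spanned by $P$ that meet $\ell$. There is no real obstacle here; the only thing to verify with a little care is that the regions in $\R^d$ are genuinely in bijection (in circular order) with the sectors in $\Pi$, which follows from the product structure noted above.
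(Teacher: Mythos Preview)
Your proposal is correct and is exactly the intended argument: the paper states the corollary immediately after Lemma~\ref{lemma_2m_parts_plane} without proof, relying on the obvious reduction by projecting along~$\ell$ onto a complementary $2$-plane. The details you spell out (that the hyperplanes project to distinct lines through $x$, the regions to sectors, and that a triangle meets $\ell$ iff its projection contains $x$) are precisely what makes the corollary immediate.
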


\begin{proof}[Proof of Theorem \ref{thmsel}.]
Let $S$ be an $n$-point set in $\R^d$. By
Corollary~\ref{c_part_4d_2} there exist $2d-1$ hyperplanes that pass
through a common $(d-2)$-flat $\ell$ and partition $S$ into parts of
size at most $n/(4d-2) + O(1)$ each. We show that $\ell$ is our
desired $(d-2)$-flat.

Each part has at least $n/(4d-2) - O(1)$ points, so there are at
least $\left({n \over 4d-2} - O(1) \right)^{4d-2}$ ways to choose
$4d-2$ points, one from each part.

By Corollary~\ref{cor_2m_parts_d}, each such choice of points
defines at least $2d(2d-1)(2d-2)/3$ triangles that intersect $\ell$.
On the other hand, each such triangle is counted at most $\left({n
\over 4d-2} + O(1) \right)^{4d-5}$ times. Thus the number of
triangles intersected by $\ell$ is at least
\begin{multline*}
\left({n \over 4d-2} - O(1) \right)^{4d-2} \cdot {2d(2d-1)(2d-2)
\over 3} \bigg/ \left({n \over 4d-2} + O(1) \right)^{4d-5}\\
= {d^2-d \over 6(2d-1)^2} n^3 - O(n^2). \qedhere
\end{multline*}
\end{proof}

\section{Discussion}

The main open problem is to determine the exact value of the
constants $c_d$ of the First Selection Lemma for $d\ge 3$. There
remains a multiplicative gap of roughly $(d-1)!$ between the current
lower bound (\ref{eq_wagner_bd}) and our upper bound (given by
Theorem~\ref{momentcurvthm}). We conjecture that
Theorem~\ref{momentcurvthm} is tight, and that the correct constants
are $c_d = (d+1)^{-(d+1)}$.

We suspect that the construction in Theorem~\ref{momentcurvthm} also
witnesses sharpness of Theorem~\ref{thmsel}. But, to our
embarrassment, we have been unable to find even the line that stabs
most triangles in this construction for $d = 3$.

\subsection{A generalization of centerpoints}

Rado's Centerpoint Theorem \cite{rado} implies that for every
$n$-point set $S$ in $\R^d$ there exists a $(d-2)$-flat $\ell$ that
lies at depth $n/3$ with respect to $S$, in the sense that every
halfspace that contains $\ell$ contains at least $n/3$ points of
$S$. (Simply project $S$ into an arbitrary plane.)

But the $(d-2)$-flat $\ell$ of Corollary~\ref{c_part_4d_2} lies at
depth $(d-1)n/(2d-1) - O(1)$ with respect to $S$. (Indeed, every
halfspace that contains $S$ completely contains $2d-2$ of the $4d-2$
parts mentioned in Corollary~\ref{c_part_4d_2}.) We do not know
whether this bound is tight.

This suggests the following generalization of Rado's Theorem: If $S$
is an $n$-point set in $\R^d$ and $0\le k < d$, then there always
exists a $k$-flat at depth $\delta_{d,k} n$ with respect to $S$---a
``center-$k$-flat"---for some constants $\delta_{d,k}$. The general
question of determining these constants $\delta_{d,k}$ has not been
explored, as far as we know. (The formula $\delta_{d,k} =
(k+1)/(d+k+1)$ seems to fit all the currently known data.)

\subsection{From the First Selection Lemma to the Second}

The First Selection Lemma has been generalized by B\'ar\'any et
al.~\cite{bfl}, in conjunction with Alon et al.
\cite{abfk_secondsel}, and \v Zivaljevi\'c and Vre\'cica
\cite{zv_color_tverberg}, to the following result, called the
\emph{Second Selection Lemma} in \cite{matou_book}:

If $S$ is an $n$-point set in $\R^d$ and $\mathcal F$ is a family of
$m\le {n \choose d+1}$ $d$-simplices spanned by $S$, then there
exists a point $p\in \R^d$ contained in at least
\begin{equation}\label{eq_2nd_SL}
c'_d \left({ m \over n^{d+1}}\right)^{s_d} n^{d+1}
\end{equation}
simplices of $\mathcal F$, for some constants $c'_d$ and $s_d$ that
depend only on $d$. (Note that $m/n^{d+1} = O(1)$, so the smaller
the constant $s_d$, the stronger the bound.)

The Second Selection Lemma is an important ingredient in the
derivation of non-trivial upper bounds for the number of
\emph{$k$-sets} in $\R^d$ (see \cite[ch.~11]{matou_book} for the
definition and details). The derivation proceeds by ``lifting" the
lemma by one dimension, obtaining that if $\mathcal F$ is a family of
$m$ $d$-simplices spanned by $n$ points in $\R^{d+1}$, then there
exists a \emph{line} that stabs $\Omega{\left( \left({ m \over
n^{d+1}}\right)^{s_d} n^{d+1} \right)}$ simplices of $\mathcal F$.

Does this lifting step result in a loss of tightness? If we may make
an analogy from the results of this paper, it seems that the answer
is yes. (As we showed, $c_{2,0} = 1/27$ by
Theorem~\ref{momentcurvthm}, whereas $c_{3,1} \ge 1/25$ by
Theorem~\ref{thmsel}.)

The current best bound for the Second Selection Lemma for $d = 2$ is
$\Omega(m^3 / (n^6 \log^2 n))$, due to Eppstein \cite{eppstein,
nivasch_sharir} (so $s_2$ can be taken arbitrarily close to $3$ in
(\ref{eq_2nd_SL})). On the other hand, we know that if $\mathcal F$
is a set of $m$ triangles in $\R^3$ spanned by $n$ points, there
exists a line (specifically, a line determined by two points of $S$)
that stabs $\Omega(m^3 / n^6)$ triangles of $\mathcal F$ (see
\cite{dey_edelsbrunner} and \cite{shakharphd} for two different
proofs of this fact). It might turn out that this logarithmic gap
between the two cases is an artifact of the current proofs, but we
believe that the three-dimensional problem \emph{does} have a larger
bound than the planar one.

\subsection*{Acknowledgment}
We would like to thank an anonymous referee for useful comments.

\bibliographystyle{alpha}
\bibliography{first_sel_lemma}

\appendix

\section{The planar construction of Boros and
F\"uredi}\label{app_BF_construction}

Boros and F\"uredi \cite{boros_furedi_ptintriag} constructed a
planar $n$-point set $\mathcal P_n$ for which, they claimed, no
point $x\in \R^2$ is contained in more than $n^3/27 + O(n^2)$
triangles spanned by $\mathcal P_n$. There is a problem in their
construction, however, and, as we show here, there exists a point
$x$ contained in $\left( {1\over 27} + {1\over 729}\right) n^3$
triangles spanned by $\mathcal P_n$.

Their set $\mathcal P_n$ lies on the unit circle, and it consists of
three clusters of $n/3$ points, denoted $\mathcal A$, $\mathcal B$,
and $\mathcal C$.

\begin{figure}
\centerline{\includegraphics{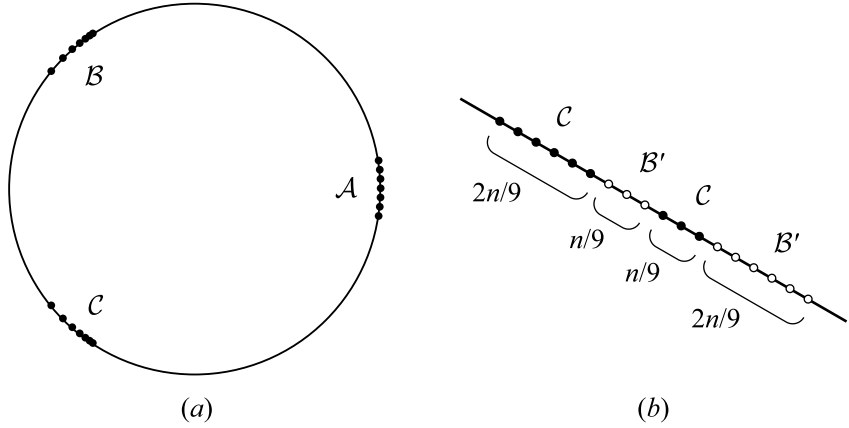}}
\caption{\label{fig_BF_construction}(\emph{a}) The set $\mathcal
P_n$ of Boros and F\"uredi. (\emph{b}) Relative order of the points
of $\mathcal B'$ (white circles) and $\mathcal C$ (black circles)
along the unit circle (which appears here almost as a straight
line).}
\end{figure}

Each cluster is very small (sufficiently so), and the clusters are
separated by an angular distance of roughly $2\pi/3$ from each
other. The construction is not symmetric, however. The points in
cluster $\mathcal A$ are uniformly separated, while the points in
clusters $\mathcal B$ and $\mathcal C$ are separated by rapidly
increasing distances (sufficiently rapidly so), with the distances
increasing \emph{away} from $\mathcal A$; see
Figure~\ref{fig_BF_construction}(\emph{a}).

Now, let $x$ be a point in the convex hull of $\mathcal B \cup
\mathcal C$. For each point $b\in\mathcal B$, trace a line $\ell_b$
from $b$ through $x$, until it intersects the unit circle again at
$b'$. Let $\mathcal B'$ denote the set of these points $b'$. If
$\mathcal B$ is sufficiently small and $x$ is not too close to
$\mathcal B$, then the lines $\ell_b$ will be almost parallel to
each other.

The relative order between the points of $\mathcal B'$ and the
points of $\mathcal C$ along the unit circle determines the number
of triangles of the form $\mathcal{ABC}$, $\mathcal{BBC}$, and
$\mathcal{BCC}$ that contain $x$. (By a triangle of the form
$\mathcal{ABC}$ we mean a triangle $abc$ with $a\in \mathcal A$,
$b\in\mathcal B$, $c\in\mathcal C$; and so on.)

In fact, each triangle of the form $\mathcal{ABC}$ containing $x$
corresponds to a triple $a b'c$, with $a\in \mathcal A$, $b'\in
\mathcal B'$, $c\in \mathcal C$, such that $c$ is farther from $a$
than $b'$. Similarly, each triangle of the form $\mathcal{BBC}$
containing $x$ corresponds to a triple $b'_1b'_2c$, with
$b'_1,b'_2\in \mathcal B'$, $c\in \mathcal C$, where $c$ lies
between $b'_1$ and $b'_2$. And each triangle of the form
$\mathcal{BCC}$ containing $x$ corresponds to a triple $b'c_1c_2$,
with $b'\in \mathcal B'$, $c_1,c_2\in \mathcal C$, where $b'$ lies
between $c_1$ and $c_2$.

Note that the distances between the points of $\mathcal B'$ increase
rapidly \emph{towards} $\mathcal A$. Also note that moving the point
$x$ towards or away from $\mathcal B$ has the effect of enlarging or
shrinking the image $\mathcal B'$, while moving $x$ sideways has the
effect of moving $\mathcal B'$ sideways.

Therefore, it is not hard to see, we can position the point $x$ such
that the order of the points in $\mathcal B' \cup \mathcal C$,
reading \emph{towards} $\mathcal A$, is: $2n/9$ points of $\mathcal
C$, followed by  $n/9$ points of $\mathcal B'$, followed by  $n/9$
points of $\mathcal C$, followed by $2n/9$ points of $\mathcal B'$;
see Figure~\ref{fig_BF_construction}(\emph{b}).

It follows that $x$ is contained in ${8\over 243} n^3$ triangles of
the form $\mathcal{ABC}$, ${2\over 729} n^3$ triangles of the form
$\mathcal{BBC}$, and ${2\over 729}n^3$ triangles of the form
$\mathcal{BCC}$. Thus, $x$ is contained in a total of ${28\over 729}
n^3 = ({1\over 27} + {1\over 729})n^3$ triangles.

On the other hand, it can be checked that this point $x$ is the one
that stabs asymptotically the maximum number of triangles. Hence,
this construction gives a bound of $c_2 \le 1/27 + 1/729$.

\end{document}